\DeclarePairedDelimiter\ceil{\lceil}{\rceil}
\DeclarePairedDelimiter\floor{\lfloor}{\rfloor} 
\newtheorem{thm}{Theorem}[section]
\newtheorem{prop}{Proposition}[section]
\newtheorem{cor}[thm]{Corollary}
\newtheorem{defn}[thm]{Definition}
\newtheorem{lem}[thm]{Lemma}
\newtheorem{rem}[prop]{Remark}
\newcommand{\B}{\mathcal{B}}
\newcommand{\G}{\mathcal{G}}
\newcommand{\bH}{\mathbb{H}}
\newcommand{\Z}{\mathbb{Z}}
 \xdef\macro@boxdim@YT{\expandonce@YT\boxdim@normal@YT}%
 \xdef\macro@boxdim@YT{#1}%
\def\set@mathmode@YT{
 \gdef\skipin@YT{$}
 \gdef\skipout@YT{$}
 \def\smallfont@YT{\scriptstyle} }
\let\@fnsymbol\@arabic
\date{}
\title{On the expansion constant and distance constrained  colourings of hypergraphs}
\author {Annayat Ali$^{1}$ and Rameez Raja$^{2}$\footnote{$^{, 2}$Department of Mathematics, National Institute of Technology Srinagar, Jammu and Kashmir, India. Email: annayat\_05phd20@nitsri.net, rameeznaqash@nitsri.ac.in, 
Corresponding author:$^2$}}
\begin{document}
\maketitle
\begin{abstract} 
For any two non-negative integers $h$ and $k$, $h > k$, an \textit{$L(h,k)$-colouring} of a graph $G$ is a colouring of vertices such that adjacent vertices admit colours that at least  differ by  $h$ and vertices that are two distances apart admit colours that at least differ by $k$. The smallest positive integer $\delta$ such that $G$ permits an $L(h,k)$-colouring with maximum colour $\delta$ is known as the \textit{$L(h,k)$-chromatic number} ($L(h,k)$-colouring number) denoted by $\lambda_{h,k}(G)$.  In this paper, we discuss some interesting invariants in hypergraphs. In fact, we study the relation between the spectral gap and $L(2,1)$-chromatic number of hypergraphs. We derive some inequalities which relates $L(2,1)$-chromatic number of a $k$-regular simple graph to its spectral gap and expansion constant. The upper bound of $L(h,k)$-chromatic number in terms of various hypergraph invariants such as \textit{strong chromatic number}, \textit{strong independent number} and maximum degree is obtained. We   determine the sharp upper bound for $L(2,1)$-chromatic number of hypertrees in terms of its maximum degree. Finally, we conclude this paper with a discussion on $L(2,1)$-colouring in cartesian product of some classes of hypergraphs.
\end{abstract}
\textbf{Keywords:} Expansion constant, spectral gap, $L(h,k)$-colouring, $\lambda$-chromatic number, cartesian product.\\
\textit{2020 AMS Classification Code}: 05C15, 05C50, 05C65, 05C78.
   
\section{Introduction}
\label{s1}
A \textit{hypergraph} $\bH=(V,E)$ on a finite vertex set $V$ is a collection of non-empty subsets  $E=\{e_i:e_i \subset V, ~i \in I\}$, where $I$ is an indexing set. $\bH$ is said to be \textit{simple} if no edge contains another and every edge has a cardinality greater than one. If the intersection of any two edges contains at most one element, then $\bH$ is said to be \textit{linear}. The maximum cardinality of a edge in $\bH$  is called its \textit{rank} denoted by $r(\bH)$ and the minimum cardinality of a edge  is called its \textit{corank} denoted by  $cr(\bH)$. If $r(\bH) = cr(\bH)=r$, then $\bH$ is said to be \textit{$r$-uniform}. The star $\bH(v)$ rooted at $v \in V$  is the collection of all edges $e$ containing $v$.  The degree of a vertex $v \in V$, denoted by $deg(v)$, is equal to the number of edges containing $v$ except for a loop $\{v\}$ which contributes $2$ to the degree of $v$. All the hypergraphs considered in this article are simple. 

For any two non-negative integers $h$ and $k$, $h > k$, an  $L(h,k)$-colouring of a hypergraph $\bH$ is a function $f:V \longrightarrow \mathbb{Z}_{\geq 0}$ such that $|f(v)-f(u)|\geq h$, whenever  $v,u \in e$ and $|f(v)-f(u)| \geq  k$, whenever  there exists $e_1, e_2 \in E$  with $v \in e_1$, $u \in e_2$ and $(e_1\cap e_2)\setminus\{v,u\} \neq \phi$. The difference between the maximun and the minimum value of $f$ is called its \textit{span}. The minimum span over all  $L(h,k)$-colourings of a hypergraph  is known as the \textit{$L(h,k)$-chromatic number} of $\bH$, denoted by $\lambda_{h,k}(\bH)$, and the corresponding  colouring $f$ is called a minimal \textit{$L(h,k)$-colouring}. We refer to $L(2,1)$-chromatic number of $\bH$ as its \textit{$\lambda$-chromatic number} denoted by $\lambda(\bH)$.

The \textit{expansion constant} or \textit{isoperimetric constant} of a simple connected graph $G=(V, E)$ denoted by $h(G)$, is defined as, $h(G)= \inf\{\frac{|\partial S|}{\min\{|S|,|V-S|\}}: S \subseteq V,  ~0 < |S| <  \infty\}$, where $\partial S$ is the set of edges in $G$ connecting $S$ to its complement in $G$ and it is called as the \textit{boundary set} of $S$.  

The motivation of study of $L(h,k)$-colourings in hypergraphs is more general than graphs. We allow radio transmitters at a region to correspond to the vertices of a hypergraph with edges consisting of transmitters that are close to each other. Defining a $L(h,k)$-colouring on the hypergraph is equivalent to assigning channel frequencies to radio transmitters in such a manner that neighbouring radio channels do not interfere with each other. On the other hand,  expansion constant is one of the very interesting measures of the connectivity of a graph $G$. If we consider $G$ to be a network that transmits information, then $h(G)$ measures the effectiveness of $G$ as a network. In fact, if $h(G)$ is large, then $G$ is considered to be an efficient network.

The research article is divided into four sections. In section \eqref{s2}, we study invariants such as expansion constant, spectral gap and $\lambda$-chromatic number in graphs. Further, we discuss the relationship between spectral gap and $\lambda$-chromatic number of $\bH$. Section \eqref{s3} consists of three subsections, in the first subsection, we present some upper bounds for $L(h,k)$-chromatic number of $\bH$ in terms of hypergraph invariants. In the second subsection, we relate the parameter $\lambda(\bH)$ to $\chi(\mathcal{G})$, where $\chi(\mathcal{G})$ denotes the usual chromatic number of the line graph associated with $\bH$. We prove that $\lambda_{h, k}([\bH]_s) = \lambda_{h, k}(\bH)$, where $[\bH]_s$ denotes the \textit{s-section} of $\bH$. The third subsection is devoted to study the $\lambda$-chromatic number of hypertrees. Finally, in section \eqref{s4}, we determine the $\lambda$-chromatic number of the cartesian product of some classes of hypergraphs.

\section{Expansion constant, spectral gap and $\lambda$-chromatic number}
\label{s2}
 The adjacency matrix of $\bH$ with $n$ vertices is a $n \times n$ matrix with $(i, j)^{th}$ entry $1$ if $v_i$  is adjacent to $v_j$, otherwise zero. The \textit{spectrum} of $\bH$ is the set of $n$ eigenvalues $\{\mu_0 \geq \mu_1 \geq  \cdots \geq  \mu_{n-1}\}$ of the adjacency matrix of $\bH$. Its \textit{spectral gap} is the difference $ \mu_0 - \mu_1$  between the largest and second largest eigen values. The expansion constant for a finite graph $G=(V,E)$ of order $n$ can be also rephrased as, $h(G)= \min\{\frac{|\partial F|}{|F|}: F \subset V, 0 < |F| \leq \frac{n}{2}\}$. It can be verified that if $G$ is a connected $k$-regular graph with $n$ vertices, then
$\mu_0 = k  >  \mu_1 \geq \cdots  \geq \mu_{n-1} \geq -k$.

The following result due to Alon and Milman \cite{AM} and Dodziuk \cite{D} estimates the expansion constant of a finite connected $k$-regular graph in terms of  the spectral gap.
\begin{thm}
\label{ec}
let $G$ be a finite connected  $k$-regular graph. Then the following hold, 
$$\frac{k-\mu_1}{2} \leq  h(G) \leq \sqrt{2k(k-\mu_1)}~.$$
\end{thm}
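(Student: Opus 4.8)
The plan is to prove the classical Cheeger-type double inequality by establishing the two bounds separately, using the variational characterization of eigenvalues of the adjacency matrix of a $k$-regular graph.

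\textbf{Lower bound $\frac{k-\mu_1}{2}\le h(G)$.} First I would pass to the Laplacian $L = kI - A$, whose eigenvalues are $0 = k-\mu_0 < k-\mu_1 \le \cdots$, so that $k-\mu_1$ is the smallest nonzero Laplacian eigenvalue, realized by the Rayleigh quotient $\min\{\frac{\langle Lg,g\rangle}{\langle g,g\rangle} : g\perp \mathbf{1}\}$, and one checks $\langle Lg,g\rangle = \sum_{uv\in E}(g(u)-g(v))^2$. Given any $F\subset V$ with $0<|F|\le n/2$, I would plug in the test function $g = |V\setminus F|\,\chi_F - |F|\,\chi_{V\setminus F}$ (or equivalently $\chi_F$ shifted to be orthogonal to $\mathbf 1$); the numerator becomes $|\partial F|\cdot n^2$ up to bookkeeping and the denominator $|F||V\setminus F|\cdot n$, which after using $|V\setminus F|\ge n/2$ yields $k-\mu_1 \le \frac{n|\partial F|}{|F||V\setminus F|} \le \frac{2|\partial F|}{|F|}$. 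Taking the infimum over $F$ gives $k-\mu_1\le 2h(G)$, i.e. the left inequality. This direction is essentially a one-line test-function argument once the Laplacian is set up.

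\textbf{Upper bound $h(G)\le \sqrt{2k(k-\mu_1)}$.} This is the genuinely harder direction — the ``hard part'' of the theorem. I would take $g$ to be an eigenfunction for $\mu_1$, so $\langle Lg,g\rangle = (k-\mu_1)\langle g,g\rangle$. By translating and, if necessary, negating, I can arrange that the superlevel set $V_+ = \{v : g(v)>0\}$ has $|V_+|\le n/2$; replacing $g$ by its positive part $g_+ = \max(g,0)$ one shows $\frac{\sum_{uv\in E}(g_+(u)-g_+(v))^2}{\sum_v g_+(v)^2} \le k-\mu_1$ (using that $g$ is a $\mu_1$-eigenfunction and that $g_+$ is supported on $V_+$). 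The core of the proof is then the discrete co-area / Cauchy--Schwarz manipulation: order the vertices by decreasing value of $g_+$, consider the threshold sets $S_t = \{v : g_+(v)\ge t\}$, and estimate
\[
\sum_{t\ge 0} |\partial S_t| \cdot 2t\,dt \;\le\; \Big(\sum_{uv\in E}(g_+(u)-g_+(v))^2\Big)^{1/2}\Big(\sum_{uv\in E}(g_+(u)+g_+(v))^2\Big)^{1/2},
\]
bound the left side below by $h(G)\sum_v g_+(v)^2$ (since $|\partial S_t|\ge h(G)|S_t|$ for every $t>0$ because $|S_t|\le|V_+|\le n/2$), and bound the second factor on the right by $\sqrt{2k}\big(\sum_v g_+(v)^2\big)^{1/2}$ using $k$-regularity (each vertex appears in $\le k$ edges, so $\sum_{uv\in E}(g_+(u)+g_+(v))^2 \le 2\sum_{uv\in E}(g_+(u)^2+g_+(v)^2) \le 2k\sum_v g_+(v)^2$). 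Combining these three estimates with the eigenvalue identity gives $h(G)\le \sqrt{2k(k-\mu_1)}$.

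\textbf{Main obstacle.} The delicate points are all in the upper bound: justifying the reduction to the positive part $g_+$ while keeping the Rayleigh quotient controlled by $k-\mu_1$ (this uses the eigenvalue equation crucially, not just minimality), and carrying out the layer-cake summation so that the boundary term $\sum_t |\partial S_t|$ telescopes correctly against $\sum_{uv\in E}|g_+(u)-g_+(v)|\cdot|g_+(u)+g_+(v)|$. Everything else — the Laplacian setup, the test function for the lower bound, the regularity estimate $\sum(a+b)^2\le 2k\sum g^2$ — is routine. Since this is a well-known theorem of Alon--Milman and Dodziuk, I would either reproduce this Cheeger-inequality argument in the discrete setting or simply cite \cite{AM, D} and sketch the test-function half; I expect the paper does the latter.
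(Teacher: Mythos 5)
Your proposal is correct, and it is the standard Cheeger-type argument; but note that the paper does not actually prove Theorem \ref{ec} at all --- it states the result as due to Alon--Milman \cite{AM} and Dodziuk \cite{D} and simply cites them, exactly as you anticipated in your closing remark. So there is no in-paper proof to compare against line by line. That said, the three estimates at the heart of your upper-bound argument --- the level-set (co-area) identity $\sum_{uv\in E}|g_+(u)^2-g_+(v)^2|=\sum_i|\partial S_i|(\beta_i^2-\beta_{i-1}^2)$, the Cauchy--Schwarz bound by $\sqrt{2k}\,\|dg_+\|\,\|g_+\|$, and the lower bound $h(G)\|g_+\|^2$ using $|S_t|\le n/2$ --- are precisely Claims 1--3 in the paper's proof of its Theorem \ref{lc}, so your sketch is entirely consistent with the machinery the authors themselves use. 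Two small points of hygiene if you were to write this out in full: the expression $\sum_{t\ge0}|\partial S_t|\cdot 2t\,dt$ conflates a sum over the finitely many threshold values with an integral (either write the discrete telescoping sum over the distinct values of $g_+$, as the paper does in Claim 1, or the genuine integral $\int_0^\infty|\partial S_{\sqrt{s}}|\,ds$); and in the reduction to $g_+$ you should record explicitly the termwise inequality $(g(u)-g(v))(g_+(u)-g_+(v))\ge (g_+(u)-g_+(v))^2$, which is where the eigenvalue equation (rather than mere minimality of the Rayleigh quotient) enters. Neither issue is a gap in the idea.
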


The succeeding result investigates the relation of $\lambda$-chromatic number of a finite connected $k$-regular graph $G$ to its spectral gap and expansion constant. In fact, provide an upper bound for $h(G)$ in terms of $\lambda(G)$.
  \begin{thm}
  \label{lc}
  Let $G$ be a finite connected $k$ regular graph on $n$ vertices. Then  the following hold,
  $$h(G)  <k\lambda(G)\sqrt{\frac{n}{\floor{\frac{n}{2}}}}~.$$
  \end{thm}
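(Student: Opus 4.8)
The plan is to use Theorem~\ref{ec} as the bridge between $h(G)$ and the spectrum, and then observe that the right-hand side of the claimed inequality is generous enough that a very crude estimate on the spectral gap already suffices.

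First I would turn the spectral gap into a number. Since $G$ is a finite connected $k$-regular graph, we have $\mu_1 \ge \mu_{n-1} \ge -k$ by the fact recorded just before Theorem~\ref{ec}, so the spectral gap satisfies $k-\mu_1 \le 2k$. Substituting this into the upper estimate $h(G)\le \sqrt{2k(k-\mu_1)}$ of Theorem~\ref{ec} yields the crude bound $h(G)\le \sqrt{2k\cdot 2k}=2k$. (If one prefers, $h(G)\le k$ holds directly from the definition, since each vertex of the smaller side contributes at most $k$ boundary edges; but routing through Theorem~\ref{ec} keeps the argument in the spirit of ``spectral gap and expansion constant''.)

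Next I would record two elementary facts about the right-hand side. Assuming $n\ge 2$ (for $n=1$ the statement is degenerate, with $k=0$), connectedness forces $k\ge 1$ and guarantees an edge $\{u,v\}$; any $L(2,1)$-colouring $f$ must satisfy $|f(u)-f(v)|\ge 2$, so its span is at least $2$, whence $\lambda(G)\ge 2$. Also $\lfloor n/2\rfloor \le n/2$ gives $n/\lfloor n/2\rfloor \ge 2$, hence $\sqrt{n/\lfloor n/2\rfloor}\ge \sqrt 2$. Combining everything: $k\lambda(G)\sqrt{n/\lfloor n/2\rfloor}\ge k\cdot 2\cdot \sqrt 2 = 2\sqrt 2\,k > 2k \ge h(G)$, which is the claim.

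The honest appraisal is that there is no real obstacle here: the target inequality carries the slack factors $\lambda(G)\ge 2$ and $\sqrt{n/\lfloor n/2\rfloor}>1$, so the argument loses essentially nothing by bounding $k-\mu_1$ by $2k$. A sharper statement could be obtained by retaining $k-\mu_1$, e.g.\ $h(G)\le k\sqrt{2n/\lfloor n/2\rfloor}\cdot\sqrt{(k-\mu_1)/(2k)}$, but that is not what is asked. The only points requiring care are the bookkeeping in the degenerate cases ($k=0$, $n=1$) and checking that the final inequality is strict, which it is because $\sqrt 2>1$ and $k\ge 1$.
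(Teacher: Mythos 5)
Your proof is correct, but it is a genuinely different and far more elementary argument than the one in the paper. The paper runs a Cheeger-type computation: it builds a test function $g$ from the colouring, establishes a co-area identity $\mathcal{B}_g=\sum_i|\partial L_i|(\beta_i^2-\beta_{i-1}^2)$ over level sets, bounds $\mathcal{B}_g\le\sqrt{2k}\,\|dg\|\,\|g\|$ by Cauchy--Schwarz and $\mathcal{B}_g\ge h(G)\|g\|^2$ from the definition of $h(G)$, and then combines $h(G)\|g\|\le\sqrt{2k}\,\|dg\|$ with the estimates $\|g\|>\sqrt{\floor{n/2}}$ and $\|dg\|<\sqrt{kn/2}\,\lambda(G)$ to reach the stated bound. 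You instead observe that the right-hand side always satisfies $k\lambda(G)\sqrt{n/\floor{n/2}}\ge 2\sqrt{2}\,k$ (since $\lambda(G)\ge 2$ for any graph with an edge and $n/\floor{n/2}\ge 2$), while $h(G)\le k$ trivially (each vertex of the smaller side contributes at most $k$ boundary edges), or $h(G)\le 2k$ if one insists on routing through Theorem~\ref{ec} with $\mu_1\ge -k$; either way $2\sqrt{2}\,k>2k\ge h(G)$ and the inequality is strict for $k\ge1$. Your bookkeeping of the degenerate cases ($n=1$, $k=0$) is appropriate, and your chain of inequalities is valid. What each approach buys: the paper's argument contains real content in its intermediate step $h(G)\|g\|\le\sqrt{2k}\,\|dg\|$, which ties the expansion constant to the colouring in a nontrivial way, but the final estimates discard almost all of it; your argument makes transparent that the theorem as stated carries no more information than the trivial bound $h(G)\le k$, since the right-hand side never drops below $2\sqrt{2}\,k$. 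That observation is worth recording, as it suggests the statement should be sharpened if it is to say anything about the interaction between $\lambda(G)$ and $h(G)$.
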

  \begin{proof}
 Consider two real linear spaces $l^2(V)=\{g:V\longrightarrow \mathbb{R}\} \cong \mathbb{R}^n$  and $l^2(E)=\{f:V\longrightarrow \mathbb{R}\} \cong \mathbb{R}^m$, where $|V| = n$ and $|E| = m$. Define the inner product$<,>$ on $l^2(V)$ by $<g,h> = \sum\limits_{x \in V}g(x)h(x)$. It can be defined analogously on $l^2(E)$. Choose a function $g \in l^2(V) $ such that  $|supp(g(x))| \leq \frac{n}{2}$, where $supp(g(x)) =\{ x \in V | g(x) \neq 0\}$. We orient  edges of $G$ arbitrarily and associate $e^-$  as an origin for any $e \in E$ and $e^+$ its extremity. This orientation allows us to define a function $d:l^2(V) \longrightarrow l^2(E),$ such that
$dg(e) = g(e^+) - g(e^-)$,  where $g \in l^2(V)$. 
  
We set $\B_g = \sum\limits_{e \in E} |g(e^+)^{2} - g(e^-)^{2}|$ and let  $g(V)=\{ 0=\beta_0 < \beta_1 < \beta_2 < \cdots < \beta_r =\lambda(G)\}$, $L_i  =\{ x \in V ~|~g(x) \geq \beta_i\}$. Then, $L_0 =V$, $\partial L_0 = \phi$ and $L_r \subsetneq  L_{r-1} \subsetneq \cdots  \subsetneq L_0 =V $. Note that $\partial L_i$ consists of edges that connect a vertex $x \in L_i$ to a vertex $y$ with $g(y) < \beta_i$. Now, we have the following claims.\\
{\bf Claim-1:} $\B_g = \sum\limits _{i=1}^r|\partial L_i|(\beta^2_i - \beta^2_{i-1})$.\\
To prove the claim, we consider the set $E_g $ which consists of all edges $e \in E$ with $g(e^+) \neq g(e^-)$. Clearly, $\B_g = \sum\limits _{ e \in E_g} |g(e^+)^{2} - g(e^-)^{2}|$. Any edge  $e \in E_g$ connects a vertex $x$  with $g(x)= \beta_{i(e)}$ to a vertex $y$ with $g(y)= \beta_{j(e)}$, where $i(e) >j(e)$. Therefore, 
  \begin{eqnarray*}
  \B_g &= & \sum\limits _{ e \in E_g}(\beta^2_{i(e)}-  \beta^2_{j(e)})\\
  &=& \sum\limits _{ e \in E_g}(\beta^2_{i(e)}- \beta^2_{i(e)-1} + \beta^2_{i(e)-1} - \cdots -\beta^2_{j(e)+1}+\beta^2_{j(e)+1}-\beta^2_{j(e)} )\\
  &=& \sum\limits _{ e \in E_g}\sum\limits_{t=j(e)+1}^{i(e)}(\beta^2_{t}-  \beta^2_{t-1}).
  \end{eqnarray*}
An edge that connects two vertices $x$  and $y$ with $g(x)= \beta_{i(e)}$ and  $g(y)= \beta_{j(e)}$ passes through each level $L_t$, $j(e) <t <i(e)$. This corresponds to expanding the term $(\beta^2_{i(e)}-  \beta^2_{j(e)})$ in the expression of $ \B_g$ by introducing the zero difference $(\beta^2_{t}-  \beta^2_{t})$  for each level curve $L_t$ that is passed by an edge $e$. This implies that the zero term $(\beta^2_{t}-  \beta^2_{t})$ appears in $\B_g$ for every edge $e$ connecting some vertex  $x$  with $g(x)= \beta_{i}$, $i \geq t$ to some vertex $y$ with $g(y)= \beta_{j}$, $j  < l$. In other words, the difference appears for every edge $e \in \partial L_i$. Therefore the Claim-1 is established, that is,
  \begin{equation}
  \label{1}
  \B_g = \sum\limits _{i=1}^r|\partial L_i|(\beta^2_i - \beta^2_{i-1}).
  \end{equation}
  {\bf Claim-2:} $\B_g \leq \sqrt{2k}\|dg\|\|g\|$.\\
By Cauchy-Schwarz inequality and the fact that $(a+b)^2\leq 2(a^2+b^2)$, 
   \begin{eqnarray*}
    \B_g &= & \sum\limits_{e \in E} (g(e^+) + g(e^-))(g(e^+) - g(e^-))\\
       &\leq& \bigg[ \big(g(e^+) + g(e^-)\big)^2 \bigg]^{\frac{1}{2}}\bigg[\big(g(e^+) - g(e^-)\big)^2\bigg]^{\frac{1}{2}}\\
       &\leq & \sqrt{2}\bigg[\big(g(e^+) + g(e^-)\big)^2\bigg]^{\frac{1}{2}}\|dg\| \\
       &=& \sqrt{2k}\|g\|\|dg\|.
    \end{eqnarray*}
 This proves Claim-2 and therefore,
    \begin{equation}
    \label{2}
    \B_g \leq \sqrt{2k}\|dg\|\|g\|.
    \end{equation}
\\
    {\bf Claim-3:} $\B_g \geq h(G) \|g\|^2$.\\
Since $|supp(g(x)| \leq \frac{n}{2}$ , therefore, $|L_i|  \leq  \frac{n}{2}$, where $i = 1, 2, \cdots, r$. By the definition of $h(G)$, $h(G)|L_i| \leq |\partial L_i|$. Therefore by \eqref{1},
  \begin{eqnarray*}
  \B_g &\geq& h(G) \sum\limits _{i=1}^r| L_i|(\beta^2_i - \beta^2_{i-1})\\
  &=&h(G)\bigg(|L_r|\beta^2_r +(|L_{r-1}|-|L_r|)\beta^2_{r-1}+ \cdots +(|L_1|-|L_2|)\beta^2_1\bigg)\\
  &=&h(G)\bigg( |L_r|\beta^2_r +  \sum\limits _{i=1}^{r-1}(|L_i-L_{i+1}|)\beta^2_i\bigg).
   \end{eqnarray*}
  Due to the fact that the set $L_i-L_{i+1}$ contains exactly those vertices for which $g$ takes value $\beta_i$, therefore it follows,
   \begin{equation}
   \label{3}
   \B_g \geq h(G) \|g\|^2.
   \end{equation}
   Thus the Claim-3 follows.
 Furthermore,  By \eqref{2} and \eqref{3}, 
    \begin{equation}
    \label{4}
   h(G) \|g\| \leq   \sqrt{2k}\|dg\|.
    \end{equation}
    Since $\lambda(G) \geq 2$, therefore,
 $\|g\| > \sqrt{\floor{\frac{n}{2}}}$ and  $\|dg\| < \sqrt{\frac{kn}{2}}\lambda(G)$. This implies, \begin{equation}
 \label{eq5}
  h(G) \|g\|> h(G) \sqrt{\floor{\frac{n}{2}}}. \end{equation}
  Using \eqref{eq5} in \eqref{4},
        \begin{eqnarray*}
        h(G)  &<&k\lambda(G)\sqrt{\frac{n}{\floor{\frac{n}{2}}}}~.
         \end{eqnarray*}
                   \end{proof} 
From  Theorems \eqref{ec} and \eqref{lc}, we have the following immediate consequence.
\begin{cor}
\label{cor}
 Let $G$ be a finite connected $k$-regular graph on $n$ vertices. Then the following hold, 
 $$k-\mu_1 < 2k\lambda(G)\sqrt{\frac{n}{\floor{\frac{n}{2}}}}~.$$
 \end{cor}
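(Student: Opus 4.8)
The plan is to simply chain together the two preceding results, since Corollary~\ref{cor} is a direct consequence of Theorems~\ref{ec} and~\ref{lc} with no additional ideas required. First I would invoke the left-hand inequality of Theorem~\ref{ec}, namely $\frac{k-\mu_1}{2}\leq h(G)$, which is valid precisely because $G$ is a finite connected $k$-regular graph; multiplying through by $2$ gives $k-\mu_1\leq 2h(G)$.

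Next I would apply Theorem~\ref{lc}, which under the same hypotheses yields $h(G)<k\lambda(G)\sqrt{\tfrac{n}{\floor{n/2}}}$, and hence $2h(G)<2k\lambda(G)\sqrt{\tfrac{n}{\floor{n/2}}}$. Concatenating the two displays,
\[
k-\mu_1\;\leq\;2h(G)\;<\;2k\lambda(G)\sqrt{\frac{n}{\floor{\frac{n}{2}}}}\,,
\]
and since a non-strict inequality followed by a strict one produces a strict inequality, we conclude $k-\mu_1<2k\lambda(G)\sqrt{\tfrac{n}{\floor{n/2}}}$, as claimed.

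There is essentially no obstacle here: the only point to be careful about is that Theorem~\ref{ec} supplies the \emph{lower} bound $\tfrac{k-\mu_1}{2}\le h(G)$ (the relevant half for this argument), while Theorem~\ref{lc} supplies the \emph{upper} bound on $h(G)$, and the two combine in the correct direction. One should also note in passing that the hypotheses of both theorems — $G$ finite, connected, and $k$-regular on $n$ vertices — coincide exactly with those of the corollary, so no extra assumptions need to be checked.
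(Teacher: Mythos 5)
Your proposal is correct and is exactly the argument the paper intends: the corollary is stated there as an ``immediate consequence'' of Theorems~\ref{ec} and~\ref{lc}, obtained by chaining $\frac{k-\mu_1}{2}\leq h(G)$ with $h(G)<k\lambda(G)\sqrt{n/\floor{n/2}}$. Nothing further is needed.
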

For $2\leq s\leq r$, the \textit{$s$-section} of $\bH$  denoted by $[\bH]_s$ is a hypergraph with vertex set  $V(\bH)$ with edges $E([\bH]_s)=\{e^\prime\}$ satisfying either $e^\prime \subseteq e \in E(\bH)$, $|e^\prime|= s$ or  $e^\prime =e$ if $|e|<s$. Observe that $[\bH]_s$ is also a simple hypergraph with  $r([\bH]_s) =s.$ Below, we relate $L(h,k)$-chromatic number of $\bH$  to the $L(h,k)$-chromatic number of its $2$-section.
 \begin{lem}\label{lem}
If $[\bH]_2$ is the $2$-section of  $\bH$, then $\lambda_{h,k}(\bH) = \lambda_{h,k}([\bH]_2)$.

\end{lem}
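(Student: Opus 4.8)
The plan is to show that the two constraint systems that define an $L(h,k)$-colouring of $\bH$ and of $[\bH]_2$ agree pair-for-pair on the common vertex set $V(\bH)=V([\bH]_2)$; once this is done, a function $f\colon V(\bH)\to\Z_{\geq 0}$ is an $L(h,k)$-colouring of $\bH$ if and only if it is one of $[\bH]_2$, and since $\lambda_{h,k}$ is by definition the minimum span over all such colourings, the two numbers must coincide.

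To set things up, for a simple hypergraph $\mathbb{K}$ and distinct $v,u\in V(\mathbb{K})$ call $v,u$ an \emph{$h$-pair} of $\mathbb{K}$ if some edge of $\mathbb{K}$ contains both, and a \emph{$k$-pair} of $\mathbb{K}$ if there are edges $e_1,e_2\in E(\mathbb{K})$ with $v\in e_1$, $u\in e_2$ and $(e_1\cap e_2)\setminus\{v,u\}\neq\emptyset$. Then an $L(h,k)$-colouring of $\mathbb{K}$ is precisely a map $f$ with $|f(v)-f(u)|\geq h$ for every $h$-pair and $|f(v)-f(u)|\geq k$ for every $k$-pair, so it is enough to prove that $\bH$ and $[\bH]_2$ have exactly the same $h$-pairs and exactly the same $k$-pairs.

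First I would handle the $h$-pairs. Since $\bH$ is simple, every edge has at least two vertices, so the second clause in the definition of $[\bH]_2$ is vacuous and $E([\bH]_2)$ is exactly the set of $2$-subsets of edges of $\bH$; in particular $[\bH]_2$ is an ordinary graph. Hence $\{v,u\}\in E([\bH]_2)$ iff $\{v,u\}\subseteq e$ for some $e\in E(\bH)$, which says precisely that the $h$-pairs of $[\bH]_2$ are the $h$-pairs of $\bH$. For the $k$-pairs, if $v,u$ is a $k$-pair of $\bH$ witnessed by $e_1,e_2$ and a vertex $w\in(e_1\cap e_2)\setminus\{v,u\}$, then $\{v,w\}$ and $\{w,u\}$ are edges of $[\bH]_2$ and, taking these as the witnessing edges in $[\bH]_2$, the common element $w$ again lies outside $\{v,u\}$, so $v,u$ is a $k$-pair of $[\bH]_2$. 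Conversely, if $v,u$ is a $k$-pair of $[\bH]_2$, the witnessing edges have cardinality $2$, so they are of the form $\{v,w\}$ and $\{w,u\}$ for some $w\notin\{v,u\}$; each of them is contained in an edge of $\bH$, say $\{v,w\}\subseteq e_1$ and $\{w,u\}\subseteq e_2$, and then $w\in(e_1\cap e_2)\setminus\{v,u\}$, so $v,u$ is a $k$-pair of $\bH$.

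With both relations matched, the $L(h,k)$-colourings of $\bH$ and of $[\bH]_2$ are literally the same set of functions, and therefore $\lambda_{h,k}(\bH)=\lambda_{h,k}([\bH]_2)$. I do not expect a genuine difficulty here: the only step that needs a little care is the converse direction for $k$-pairs, where one must use that simplicity of $\bH$ forces every edge of $[\bH]_2$ to have size exactly $2$, so that the witnessing edges in $[\bH]_2$ must take the shape $\{v,w\},\{w,u\}$; it is also worth noting that the $h$-pair and $k$-pair relations may overlap, but since $h>k$ the $h$-constraint dominates on such pairs in both hypergraphs, so this causes no discrepancy.
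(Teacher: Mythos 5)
Your proof is correct and follows essentially the same route as the paper's: both arguments come down to checking that the edge (distance-one) and intersection (distance-two) constraints of $\bH$ translate exactly into those of $[\bH]_2$ and back. You merely package this more symmetrically, showing the two sets of $L(h,k)$-colourings coincide outright, where the paper transfers a minimal colouring in one direction and then argues minimality by contradiction using the reverse transfer.
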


\begin{proof}
Let $f$ be a minimal $L(h,k)$-colouring of  $\bH$ . Since  $V(\bH)=V([\bH]_2)$, therefore $f$ is a vertex colouring of $[\bH]_2$. By the adjacency relation on $[\bH]_2$, a vertex $v_1$ is adjacent to vertex $v_2$ if and only if there exists $e \in E(\bH)$ containing both $v_1$ and $v_2$. Consequently, $|f(v_1)-f(v_2)| \geq j$, whenever $v_1$ is adjacent to $v_2$ in  $[\bH]_2$. Further, $d(v_1,v_2) =2$ in  $[\bH]_2$  if and only if there exists edges $e_1, e_2 \in E(\bH)$ such that $e_1 \cap e_2$ contains a vertex distinct from $v_1$ and $v_2$. Thus, $|f(v_1)-f(v_2)| \geq k$, whenever $d(v_1,v_2) =2$. This implies, $f$ is  well defined  $L(h,k)$-colouring of $[\bH]_2$ as well. To prove that $f$ is a minimal  $L(h,k)$-colouring of $[\bH]_2$, assume to contrary that there exists another  $L(h,k)$-colouring $g$ of   $[\bH]_2$ with $span(g) <span(f)$. It is easy to verify that $g$ is a well defined $L(h,k)$-colouring of $\bH$. Therefore, $span(g)<\lambda_{h,k}(\bH)$, a contradiction and the result follows.
\end{proof}
Note that the adjacency matrix of a hypergraph is the same as that of its $2$-section. Thus it follows that the spectrum of both the graphs are same and this allows us to deduce the following result from Corollary \eqref{cor} and Lemma \eqref{lem}.
\begin{cor}
Let $\bH$ be a connected $k$-regular, $r$-uniform hypergraph on $n$ vertices. Then the following hold, 
$$k-\mu_1 < 2k(r-1)\lambda(\bH)\sqrt{\frac{n}{\floor{\frac{n}{2}}}}~.$$
\end{cor}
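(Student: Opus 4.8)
The plan is to deduce the statement directly from Corollary~\eqref{cor} by passing to the $2$-section $[\bH]_2$, which is an ordinary graph on the same vertex set. First I would assemble the three facts that power the reduction. (i) As noted just before the statement, the adjacency matrix of $\bH$ is literally the adjacency matrix of $[\bH]_2$, so the two have the same spectrum; in particular their second largest eigenvalue $\mu_1$ is the same. (ii) Applying Lemma~\eqref{lem} with $h=2$, $k=1$ gives $\lambda([\bH]_2)=\lambda(\bH)$. (iii) Since $\bH$ is connected, $[\bH]_2$ is connected; and since $\bH$ is $k$-regular and $r$-uniform, each vertex lies in exactly $k$ edges, each contributing at most $r-1$ neighbours in $[\bH]_2$, so every vertex of $[\bH]_2$ has degree at most $k(r-1)$, with equality at every vertex precisely when $\bH$ is linear (distinct edges through a vertex then meet only in that vertex, so the neighbourhoods they contribute are disjoint).

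Assuming we are in the regular situation — i.e.\ $[\bH]_2$ is $k(r-1)$-regular — I would simply invoke Corollary~\eqref{cor} for the connected $k(r-1)$-regular graph $[\bH]_2$ on $n$ vertices:
\[
k(r-1)-\mu_1 \;<\; 2k(r-1)\,\lambda([\bH]_2)\sqrt{\frac{n}{\floor{\frac{n}{2}}}} \;=\; 2k(r-1)\,\lambda(\bH)\sqrt{\frac{n}{\floor{\frac{n}{2}}}},
\]
where the equality uses (i) and (ii). Because $r\ge 2$ we have $k\le k(r-1)$, hence $k-\mu_1\le k(r-1)-\mu_1$, and the claimed inequality
\[
k-\mu_1 \;<\; 2k(r-1)\,\lambda(\bH)\sqrt{\frac{n}{\floor{\frac{n}{2}}}}
\]
follows immediately.

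The delicate point — and the only real obstacle — is that Corollary~\eqref{cor} (and Theorems~\eqref{ec} and~\eqref{lc} behind it) is stated for \emph{regular} graphs, whereas $[\bH]_2$ need not be regular in general. There are two ways to be honest about this. The clean one is to add the hypothesis that $\bH$ is linear, which by (iii) makes $[\bH]_2$ exactly $k(r-1)$-regular and the argument above complete. The alternative is to rerun the proof of Theorem~\eqref{lc} on $[\bH]_2$ with its maximum degree $\Delta\le k(r-1)$ in place of the regularity degree: in Claim~2 the factor $\sqrt{2k}$ becomes $\sqrt{2\Delta}$ since $\sum_{e}\big(g(e^+)^2+g(e^-)^2\big)=\sum_{v}\deg(v)g(v)^2\le\Delta\|g\|^2$, and likewise $|E([\bH]_2)|\le \tfrac12 n\Delta$ refines the bound on $\|dg\|$; one obtains $h([\bH]_2)<k(r-1)\lambda(\bH)\sqrt{n/\floor{\frac{n}{2}}}$, and combining this with the lower bound $\tfrac{k-\mu_1}{2}\le h([\bH]_2)$ — which is where regularity must still be supplied, or a general-graph Cheeger estimate substituted — again yields the conclusion. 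Either route, the non-trivial content is entirely contained in the already-proved Theorem~\eqref{lc}; the corollary itself is a substitution.
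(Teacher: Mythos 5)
Your proof is correct and takes essentially the same route the paper intends: it deduces the statement by applying Corollary~\eqref{cor} and Lemma~\eqref{lem} to the $2$-section $[\bH]_2$, using that the spectrum and the $\lambda$-number of $[\bH]_2$ coincide with those of $\bH$ and that $r\geq 2$ lets you weaken $k(r-1)-\mu_1$ to $k-\mu_1$. Your added caveat --- that $[\bH]_2$ is genuinely $k(r-1)$-regular only when distinct edges through a vertex meet solely in that vertex (e.g.\ when $\bH$ is linear) --- is a real subtlety that the paper's one-line deduction silently passes over, and your two suggested repairs are both sound.
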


\section{$L(h,k)$-chromatic number of hypergraphs}
\label{s3}
This section of the article is devoted to study in detail the $\lambda_{h, k}$-chromatic number of  hypergraphs. We obtain different bounds for $\lambda_{h, k}(\bH)$ in terms of hypergraph invariants such as \textit{strong chromatic number}, \textit{strong independence  number} and maximum degree. We further discuss the relationship between $\lambda(\bH)$ and $\chi(\mathcal{G})$ and prove that $\lambda_{h,k}([\bH]_s) = \lambda_{h,k}(\bH)$. We provide sharp bounds for the $\lambda$-chromatic number of hypertrees.
\subsection{Upperbound of $\lambda_{h,k}.$}
A \textit{strong k-coloring} of $\bH$ is a partition $\{ C_1, C_2, \cdots , C_k\} $ of $V$  such that $|C_i \cap e|\leq 1$ for all $e \in E$, $1 \leq i \leq k$. The smallest $k$ such that $\bH$ has a strong $k$-coloring is known as the \textit{strong chromatic number} of  $\bH$ denoted by $\chi^\prime(\bH)$.  We provide an upper bound for $\lambda_{h,k}(\bH)$ in terms of $\chi^\prime(\bH)$.
\begin{thm}
\label{thm2.1}
If $\bH$ is a hypergraph of order $n$ with $\chi^\prime(\bH) =k^\prime$, then the following hold, 
$$\lambda_{h,k}(\bH) \leq k(n-k^\prime) +(k^\prime -1)h.$$
\end{thm}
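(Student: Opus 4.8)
The plan is to use a strong $k'$-colouring of $\bH$ to build an explicit $L(h,k)$-colouring whose span meets the claimed bound, thereby giving an upper estimate on $\lambda_{h,k}(\bH)$. Fix a strong colouring $\{C_1,C_2,\dots,C_{k'}\}$ of $\bH$, where $k'=\chi'(\bH)$; by definition each colour class $C_i$ meets every edge in at most one vertex, so $C_i$ is a \emph{strongly independent} set --- no two of its vertices lie in a common edge, and in particular no two of its vertices are at distance $1$ in $[\bH]_2$. Write $|C_i|=n_i$, so $\sum_{i=1}^{k'} n_i = n$. The idea is to assign to the vertices of $C_1$ the integers $0,1,\dots,n_1-1$ (consecutive integers are safe \emph{within} a strongly independent class, since the only constraint among them would be the distance-$2$ condition, which forces a gap of at least $k<h$; but we must be slightly careful here --- see below), then leave a gap of size $h$ before starting $C_2$, continue with $n_2$ further integers, another gap of $h$, and so on.

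The key step is to check that this assignment is a legitimate $L(h,k)$-colouring. Two vertices in the \emph{same} class $C_i$ are not in a common edge, so the only possible requirement between them is the distance-$2$ requirement $|f(u)-f(v)|\ge k$; since we want consecutive integers here we actually need to be more conservative and space vertices within a class by $k$, OR --- and this is cleaner --- observe that the theorem's bound $k(n-k')+(k'-1)h$ suggests spacing \emph{within} each class by the integer $k$ and spacing \emph{between} consecutive classes by $h$. So the refined assignment is: order the classes $C_1,\dots,C_{k'}$; within $C_i$ assign an arithmetic progression with common difference $k$; between the last label of $C_i$ and the first label of $C_{i+1}$ leave a jump of exactly $h$. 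The largest label used is then $k(n_1-1)+h+k(n_2-1)+h+\cdots+h+k(n_{k'}-1) = k\sum_i (n_i-1) + (k'-1)h = k(n-k') + (k'-1)h$, which is the desired span. For correctness: if $u,v$ lie in a common edge they lie in different classes (strong colouring), hence their labels differ by at least $h$ (the between-class jumps are $\ge h$ and within-class steps are $\ge k$, and $h>k$, but more directly any two labels in distinct classes differ by at least $h$ because of the inserted gap); if $u,v$ are at distance $2$ (sharing a neighbour through overlapping edges) then either they are in different classes, giving a difference $\ge h \ge k$, or they are in the same class, where the common difference $k$ guarantees a gap $\ge k$. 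Thus all constraints hold and $\lambda_{h,k}(\bH)\le k(n-k')+(k'-1)h$.

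I expect the main obstacle --- really the only delicate point --- to be justifying that distinct-class labels differ by at least $h$ rather than merely by the single gap of $h$ across one boundary: two vertices in $C_i$ and $C_j$ with $i<j$ have labels separated by at least one full gap of size $h$ plus the nonnegative within-class contributions, so the separation is indeed $\ge h$, which covers the adjacency requirement, and a fortiori $\ge k$ for the distance-$2$ requirement. One should also note the edge cases $n_i=0$ (an empty colour class, handled by simply not inserting a redundant gap, or by observing the formula still gives a valid upper bound) and confirm that $f$ maps into $\mathbb{Z}_{\ge 0}$, which is immediate since all labels are nonnegative integers. No appeal to the spectral results of Section~\ref{s2} is needed; this is a direct constructive argument.
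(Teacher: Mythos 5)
Your construction is exactly the one the paper uses: within each strong colour class an arithmetic progression with common difference $k$, and a jump of $h$ between consecutive classes, yielding span $k(n-k')+(k'-1)h$. The verification is also the same (vertices in a common edge necessarily lie in distinct classes, so the inter-class gap of $h$ handles adjacency, while the intra-class step of $k$ handles the distance-two condition), so your proof matches the paper's and is correct.
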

\begin{proof}
Since $\chi^\prime(\bH) =k^\prime$, we partition the vertex set $V$ of $\bH$ as $\big\{ V_1, V_2, \cdots , V_{k^\prime} \big\}$ such that each $V_i$, $1 \leq i \leq k^\prime$  is monochromatic. Thus by the definition, $|e \cap V_j| \leq 1$ for all $1 \leq i \leq k^\prime$ and $ e \in E(\bH)$. For each $i, ~ 1 \leq i \leq k^\prime$, denote the vertices in $V_i$  by $v_{j,i}$, where $1 \leq j \leq n_i = |V_i|$.  Define, $L(h,k)$-colouring $f: V(\bH) \longrightarrow \mathbb{Z}_{\geq 0}$ by, 
\begin{eqnarray*}
f(v_{j,1})&=& k(j-1), 1 \leq j \leq n_1. \\
\mbox{ For }  2 \leq i \le	 k^\prime  \mbox{ and } 1\leq j \leq n_j, \mbox{ define,}\\
f(v_{j,i})&=&k\sum\limits_{t=1}^{i-1}n_t + k(j-i) +h(i-1). \\
\mbox{ Therefore,} ~span(f)&=& k(n-k^\prime) +(k^\prime -1)h.
\end{eqnarray*}
It is clear that $f$ is well defined $L(h,k)$-colouring of $\bH$, since any vertex set $V_i$ contains at most one vertex from each edge $e \in E(\bH)$ and consequently,
$$\lambda_{h,k}(\bH) \leq k(n-k^\prime) +(k^\prime -1)h.$$

\end{proof}
\begin{rem} The inequality given in Theorem \eqref{thm2.1} is sharp for the complete $r$-uniform hypergraph $\bH_n^r$ on $n$ vertices, $2 \leq r \leq n$, since $\lambda_{h,k}(\bH_n^r) =h(n-1)$ and  $\chi^\prime(\bH_n^r) = n $.
\end{rem}
For a hypergraph  $\bH$, a set $W \subseteq V$ is called a \textit{strong stable set} if $|W\cap e| \leq 1$ for all $e \in E$. The maximum cardinality of a  strong stable set of $\bH$ is known as \textit{ strong independence number}, denoted by $\bar{\alpha}(\bH)$. We determine another upper bound for $\lambda_{h,k}(\bH)$ in terms of  $\bar{\alpha}(\bH)$.
\begin{thm}
Let $\bH$ be a hypergraph of order $n$. Then the following hold, 
$$\lambda_{h,k}(\bH) \leq nh +\bar{\alpha}(\bH)(k-h) -k.$$
\end{thm}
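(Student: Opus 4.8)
The plan is to mimic the construction used in the proof of Theorem \ref{thm2.1}, but this time building the colouring around a maximum strong stable set rather than around a strong colouring. Let $\bar{\alpha}(\bH)=\alpha$ and fix a strong stable set $W=\{w_1,w_2,\dots,w_\alpha\}$ of maximum size. The intuition is that the vertices of $W$ are the ``cheap'' vertices: since any edge meets $W$ in at most one point, no two vertices of $W$ are ever forced to differ by $h$; at worst two of them lie at distance $2$ and must differ by $k$. The remaining $n-\alpha$ vertices are the ``expensive'' ones, each of which may be forced to jump by $h$ from its predecessor. If we list the vertices in an order $u_1,u_2,\dots,u_n$ that alternates or otherwise interleaves so that we pay a gap of $k$ exactly $\alpha-1$ times and a gap of $h$ exactly $n-\alpha$ times, we accumulate a total span of $(\alpha-1)k+(n-\alpha)h$, and then add one more $h$ for the final edge-adjacency safety margin, or rather re-count carefully: with $n$ vertices there are $n-1$ consecutive gaps; making $\alpha-1$ of them equal to $k$ and the remaining $n-\alpha$ of them equal to $h$ gives span $(\alpha-1)k+(n-\alpha)h = nh+\alpha(k-h)-k+ (h-k)\cdot 0$; a short check shows this equals $nh+\bar\alpha(\bH)(k-h)-k$ exactly, which is the claimed bound.

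Concretely I would proceed as follows. First I would order $V(\bH)$ so that the elements of $W$ appear as a block at one end (say $w_1,\dots,w_\alpha$) and the elements of $V\setminus W$ as the block $x_1,\dots,x_{n-\alpha}$ afterwards; then define $f(w_1)=0$, increment by $k$ within the $W$-block, i.e. $f(w_j)=(j-1)k$ for $1\le j\le\alpha$, and then continue into the $V\setminus W$ block incrementing by $h$ each time, i.e. $f(x_i)=(\alpha-1)k+ih$ for $1\le i\le n-\alpha$. The span is then $f(x_{n-\alpha})=(\alpha-1)k+(n-\alpha)h=nh+\alpha(k-h)-k$, using $h>k\ge 0$ so the values are strictly increasing and distinct. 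Second I would verify this is a legitimate $L(h,k)$-colouring: any two vertices that share an edge have distinct $f$-values, and I must check the gap is at least $h$. If both lie in $V\setminus W$ the gap is a positive multiple of $h$, fine; they cannot both lie in $W$ since $|W\cap e|\le 1$; if one is $w_j$ and the other is $x_i$, then $|f(x_i)-f(w_j)| = (\alpha-1-(j-1))k+ih \ge ih\ge h$, fine. For the distance-$2$ condition I just need the gap to be at least $k$, and since all consecutive gaps are at least $\min(h,k)=k$ and values are monotone, any two distinct vertices differ by at least $k$ automatically. Hence $f$ is a valid $L(h,k)$-colouring and $\lambda_{h,k}(\bH)\le \operatorname{span}(f)=nh+\bar\alpha(\bH)(k-h)-k$.

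The main obstacle, and the one subtle point to get right, is the bookkeeping in the exponents and the off-by-one in the count of gaps: one must be careful that there are exactly $\alpha-1$ increments of size $k$ (one between each consecutive pair inside $W$) and exactly $n-\alpha$ increments of size $h$, and that the very first vertex is assigned $0$ so that the span equals the last value. A secondary point worth stating explicitly is why putting all of $W$ first is harmless: it is precisely because $W$ being strongly stable means the edge-constraint ($\ge h$) never applies between two $W$-vertices, so assigning them the tight spacing $k$ can never violate anything. If one wanted the cleanest possible write-up one could instead assign the $V\setminus W$ block first with spacing $h$ and the $W$ block last with spacing $k$; the span is identical, so the choice is cosmetic. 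No deeper idea than the greedy/ordered colouring of Theorem \ref{thm2.1} is needed here.
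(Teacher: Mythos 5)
Your construction is exactly the one in the paper: colour the strong stable set $W$ with spacing $k$ starting from $0$, then continue into $V\setminus W$ with spacing $h$, giving span $(\bar{\alpha}(\bH)-1)k+(n-\bar{\alpha}(\bH))h=nh+\bar{\alpha}(\bH)(k-h)-k$. Your verification of the colouring conditions is somewhat more explicit than the paper's, but the approach is the same.
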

\begin{proof} 
Let $W$ be a strong stable set of $\bH$. Then, $|V\setminus W| = n- \bar{\alpha}(\bH)$. Denote the vertices in $W$ by $w_i$, $1 \leq i \leq \bar{\alpha}(\bH)$ and vertices in $V\setminus W$ by $v_j$, $1 \leq j \leq   n- \bar{\alpha}(\bH)$. Define $L(h,k)$-colouring $f: V(\bH) \longrightarrow \mathbb{Z}_{\geq 0}$ by, 
\begin{eqnarray*}
f(w_i)&=& k(i-1),\\
f(v_j)&=& k(\bar{\alpha}(\bH) -1) +jh.\\
\mbox{ Therefore,} ~span
(f) &=& nh +\bar{\alpha}(\bH)(k-h) -k.
\end{eqnarray*}
It can be verified that $f$ is a well defined $L(h,k)$-colouring of $\bH$, since  $W$ contains at most one vertex from each $e \in E(\bH)$. Thus,
$$\lambda_{h,k}(\bH) \leq nh +\bar{\alpha}(\bH)(k-h) -k.$$
\end{proof}

The following result generalizes a result established by chang and kuo \cite{CK} to hypergraphs, therefore providing an upper bound for $\lambda(\bH)$ in terms of its maximum degree $\Delta$ and rank $r$. By putting $r = 2$, we can obtain the result of chang and kuo proved in \cite{CK}.
\begin{thm}
If $\bH$ is a  hypergraph with $r(\bH)= r$ and maximum  degree $\Delta$, then $\lambda(\bH) \leq  (r-1)(\Delta +1)\Delta$.
\end{thm}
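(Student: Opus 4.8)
The plan is to generalize the greedy/first-fit argument of Chang and Kuo from graphs to hypergraphs by working in the $2$-section $[\bH]_2$, where the $L(2,1)$-colouring problem for $\bH$ lives (by Lemma \ref{lem}, $\lambda(\bH)=\lambda([\bH]_2)$). In $[\bH]_2$, two vertices are adjacent exactly when they share an edge of $\bH$, so a vertex $v$ has at most $(r-1)\Delta$ neighbours at distance $1$: each of the at most $\Delta$ edges through $v$ contributes at most $r-1$ other vertices. Similarly, the number of vertices at distance exactly $2$ from $v$ is at most $(r-1)\Delta\cdot(r-1)\Delta$ in the crudest count, but we will need the slightly sharper bookkeeping that the relevant ``forbidden'' set when we colour $v$ has controlled size.

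\textbf{Main steps.} First I would fix an ordering $v_1,\dots,v_n$ of $V(\bH)$ and colour greedily: having assigned $f(v_1),\dots,f(v_{i-1})$, choose $f(v_i)$ to be the least non-negative integer that avoids the ``forbidden set'' $F(v_i)$, where $F(v_i)$ consists of (a) $f(u)-1, f(u), f(u)+1$ for every already-coloured neighbour $u$ of $v_i$ in $[\bH]_2$ (the $L(2,\cdot)$ constraint at distance $1$ uses a gap of $h=2$, hence three forbidden values per such $u$), and (b) $f(w)$ for every already-coloured vertex $w$ at distance $2$ from $v_i$ (the gap of $k=1$ forbids one value per such $w$). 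The key counting step is then to bound $|F(v_i)|$: there are at most $(r-1)\Delta$ distance-$1$ neighbours and at most $(r-1)\Delta\bigl((r-1)\Delta-1\bigr)$... — but this overshoots, so instead I would count distance-$\le 2$ neighbours together. A vertex at distance $\le 2$ from $v_i$ lies in an edge meeting an edge through $v_i$; the number of edges meeting the $\le\Delta$ edges through $v_i$ is at most $(r-1)\Delta\cdot\Delta$ (each of the $\le(r-1)\Delta$ vertices in $\bigcup\{e:v_i\in e\}$ lies in $\le\Delta$ further edges), and each such edge has $\le r-1$ other vertices, giving at most $(r-1)^2\Delta^2$ vertices, plus we must be careful not to double-count. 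A cleaner route: every forbidden value comes from a vertex at distance $1$ (contributing $\le 3$ values, total $\le 3(r-1)\Delta$) or distance exactly $2$ (contributing $1$ value each, total $\le (r-1)\Delta\cdot\bigl((r-1)\Delta - 1\bigr)$), so $|F(v_i)| \le 3(r-1)\Delta + (r-1)\Delta((r-1)\Delta-1) = (r-1)\Delta\bigl((r-1)\Delta+2\bigr)$. Then $f(v_i)$ can be chosen in $\{0,1,\dots,(r-1)\Delta((r-1)\Delta+2)\}$, giving a span of at most $(r-1)\Delta((r-1)\Delta+2)$.

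\textbf{The obstacle.} This straightforward bound, $(r-1)\Delta((r-1)\Delta+2)$, is \emph{larger} than the claimed $(r-1)(\Delta+1)\Delta$ unless $(r-1)\Delta+2 \le \Delta+1$, which fails; so the naive greedy count does not suffice, and matching the stated bound requires the refinement that makes Chang--Kuo's original graph bound $(\Delta+1)\Delta$ work rather than the trivial $\Delta^2+\Delta$-type estimate — namely, a more careful accounting that exploits the interaction between the distance-$1$ and distance-$2$ constraints (e.g.\ the distance-$2$ vertices reached through a common neighbour $u$ share the colour window around $f(u)$, so their forbidden values overlap substantially with those already forced by $u$). \textbf{The hard part will be} setting up this overlap argument in the hypergraph language: I expect one must order the vertices so that, when colouring $v_i$, one can pair up distance-$2$ vertices with the distance-$1$ vertex ``between'' them and show each edge through a neighbour $u$ of $v_i$ contributes a bounded number of genuinely new forbidden values beyond the three coming from $u$ itself. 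Once that local structure is pinned down, substituting $r=2$ collapses the distance-$1$ neighbourhood to size $\Delta$ and recovers exactly the Chang--Kuo bound $(\Delta+1)\Delta$, which is the consistency check I would use to calibrate the constants.
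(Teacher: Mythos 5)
Your proposal sets up the correct framework (first-fit greedy colouring, working in the $2$-section via Lemma \ref{lem}), but it does not prove the theorem. You arrive at the bound $(r-1)\Delta\bigl((r-1)\Delta+2\bigr)$, correctly observe that this strictly exceeds the claimed $(r-1)(\Delta+1)\Delta$ for every $r\geq 2$, and then defer the step that would close the gap to an ``overlap argument'' which you describe only as something you expect to be possible (``the hard part will be\ldots''). That unperformed step is precisely the content of the theorem beyond the trivial greedy estimate, so as written this is a genuine gap: no reader could extract the stated inequality from what you have.

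It is also worth noting that the paper closes the gap by a different accounting than the one you anticipate, not by pairing distance-$2$ vertices with the intermediate neighbour. It counts the second layer through the \emph{edges} of $\bH$ rather than through vertices of the $2$-section: it asserts there are at most $\Delta(\Delta-1)$ edges meeting the star of $v$ at a vertex other than $v$, charges $(r-1)$ forbidden values to each such edge (total $(r-1)\Delta(\Delta-1)$, versus your $(r-1)\Delta\bigl((r-1)\Delta-1\bigr)$), and charges only $2$ forbidden values to each of the $(r-1)\Delta$ distance-$1$ neighbours (versus your $3$), summing to $(r-1)\Delta(\Delta-1)+2(r-1)\Delta=(r-1)(\Delta+1)\Delta$. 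Both of these savings are delicate and are stated in the paper without justification: a direct count gives up to $(r-1)\Delta(\Delta-1)$ second-layer edges (hence up to $(r-1)^2\Delta(\Delta-1)$ second-layer vertices), and a plain first-fit argument forbids $3$ values per coloured neighbour, not $2$ (the ``$2$ per neighbour'' saving at $r=2$ is the Chang--Kuo refinement, which requires a modified algorithm rather than naive greedy). So if you pursue this route, the work you flagged as ``the hard part'' is genuinely where the proof lives, and simply citing the $2$-section reduction plus greedy counting will not get you below $(r-1)\Delta\bigl((r-1)\Delta+2\bigr)$.
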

\begin{proof}
We arbitrarily order the vertices of $\bH$  and colour them. We start from the least possible integer. Any vertex $v \in V(\bH)$ is adjacent to at most  $(r-1)\Delta$  vertices  and there are at most  $\Delta(\Delta -1)$ edges   whose pairwise intersection with those edges containing $v$  contains a vertex other than $v$. To colour the vertex $v$, we need to avoid at most $(r-1)(\Delta -1)\Delta + 2(r-1)\Delta =  (r-1)(\Delta +1)\Delta $ colours. Therefore the  result follows.
\end{proof}

Gonclaves \cite{G} gives the upper bound of $\lambda_{h,1}$ in terms of maximum degree for simple graphs. 
\begin{thm}[\cite{G}] 
\label{h1}
For any positive integer $h \geq 2$ and for any graph $G$ with maximum degree $\Delta \geq 3$, $\lambda_{h,1}(G) \leq \Delta^2 +(h-1)\Delta-2$.
\end{thm}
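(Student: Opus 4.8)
The plan is to argue by contradiction via a minimal counterexample, combining a list of \emph{reducible configurations} with a discharging (weight-shifting) argument, in the spirit of the earlier bound $\Delta^2+(h-1)\Delta$ (whose case $h=2$ is the Chang--Kuo bound $\Delta^2+\Delta$ of \cite{CK}). Suppose $G$ is a graph with $\Delta(G)=\Delta\ge 3$ and $\lambda_{h,1}(G)>k$, where $k:=\Delta^2+(h-1)\Delta-2$, and take such a $G$ with $|V(G)|$ as small as possible. Then the $k+1=\Delta^2+(h-1)\Delta-1$ labels $\{0,1,\dots,k\}$ do not suffice for $G$, but they do suffice for every graph on fewer vertices of maximum degree at most $\Delta$; in particular they suffice for $G-v$ for each $v$. (The restriction $\Delta\ge 3$ — or at least $\Delta\ge 2$ — is genuinely needed, since already $\lambda_{h,1}(K_2)=h>h-2$.)

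The core is the reducibility step. The prototype move is: delete a judiciously chosen vertex $v$ (occasionally together with a bounded set of its neighbours), invoke minimality for an $L(h,1)$-labelling $f$ of the resulting graph of span $\le k$, and put $v$ back. When we try to label $v$, each labelled neighbour of $v$ forbids a block of $2h-1$ consecutive labels around its value, while each labelled vertex at $G$-distance exactly $2$ from $v$ forbids a single label; a triangle through $v$, a $4$-cycle through $v$, or two neighbours of $v$ whose blocks happen to overlap each reduce this count, so whenever $\deg(v)<\Delta$ and the neighbourhood of $v$ is not too sparse there is already an unused label, contradicting minimality. The substantial work lies in the extremal configurations — most notably $v$ of full degree $\Delta$ whose neighbours and second neighbours jointly forbid every one of the $k+1$ labels; such a configuration is rigid, the labels on the neighbourhood of $v$ being packed tightly into $\{0,\dots,k\}$. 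Here one must \emph{recolour}: choose a neighbour $u$ of $v$, or a short alternating chain starting at such a $u$, move it to a label that is still legal for it in $G-v$, and verify that doing so frees a label at $v$. I expect this to be the main obstacle, because a single swap need not suffice — $u$ itself has almost as many forbidden labels as $v$ — so one must understand the availability pattern of labels across the first and second neighbourhoods of $v$ and organise a globally consistent reassignment (for instance through a Hall-type or augmenting-path selection); the whole catalogue of configurations has to be arranged so that this always works, using precisely the two labels of slack that the ``$-2$'' provides over $\Delta^2+(h-1)\Delta$. The hypotheses $h\ge 2$ and $\Delta\ge 3$ enter exactly to keep these swaps feasible.

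It then remains to show that every graph with $\Delta\ge 3$ contains one of the reducible configurations, which contradicts the existence of $G$. This is the discharging step: assign each vertex the charge $\deg(v)-\Delta\le 0$, so the total charge $2|E(G)|-\Delta|V(G)|$ is nonpositive (the $\Delta$-regular case, where it vanishes, being treated separately), redistribute charge from high-degree vertices to nearby low-degree vertices by a short list of rules, and check that if $G$ avoids all the reducible configurations then every vertex finishes with strictly positive charge — impossible. Consequently no counterexample exists, and $\lambda_{h,1}(G)\le\Delta^2+(h-1)\Delta-2$ for every graph $G$ with maximum degree $\Delta\ge 3$.
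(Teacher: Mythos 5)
This theorem is quoted from Gon\c{c}alves \cite{G}; the paper you were given does not prove it, so your argument has to stand on its own, and it does not. The central problem is that what you have written is a programme rather than a proof: the entire content of the theorem sits in the steps you defer. You acknowledge that the recolouring of a tight, full-degree vertex is ``the main obstacle'' and that ``the whole catalogue of configurations has to be arranged so that this always works'' --- but that catalogue, and the verification that the swaps succeed, \emph{is} the theorem. Worse, even the non-extremal part of your reducibility step does not close with the count you give. If each labelled neighbour forbids a block of $2h-1$ labels and each second neighbour forbids one label, then a degree-$\Delta$ vertex sees at most $\Delta(2h-1)+\Delta(\Delta-1)=\Delta^2+2(h-1)\Delta$ forbidden labels, which exceeds the available $k+1=\Delta^2+(h-1)\Delta-1$ labels by $(h-1)\Delta+1\ge 4$. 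So it is not only the rigid packed configuration that resists: with this accounting there is no free label even for vertices with moderately sparse neighbourhoods. Recovering that $(h-1)\Delta$ already requires the ordering and charging refinements of \cite{CJ} (which give $\Delta^2+(h-1)\Delta$), and the further ``$-2$'' is precisely the delicate case analysis of \cite{G}; neither mechanism appears in your sketch.

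The discharging frame also buys you nothing here. For general graphs the total charge $2|E(G)|-\Delta|V(G)|$ is nonpositive, but the hard instances for $L(h,1)$-labelling are the $\Delta$-regular and near-regular graphs, where this quantity is zero or nearly so; you set the regular case aside ``to be treated separately,'' yet that is the main case, and there is no global sparsity (as there is for planar graphs via Euler's formula) to force a low-degree or structurally degenerate vertex into existence. In short, the minimal-counterexample shell and the greedy/recolour intuition are reasonable starting points, but the proof's substance --- the precise ordering of vertices, the sharpened forbidden-label count, and the recolouring analysis that extracts the final two labels --- is missing.
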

Chang, G. J., et. al \cite{CJ} demonstrated the following noteworthy result on the ratio $\frac{\lambda_{h+1,1}(\bH)}{\lambda_{h,1}(\bH)}$ as $h$ approaches to infinity.
\begin{thm}[\cite{CJ}]
\label{ratio}
If $G$ is a graph with at least one edge, then $\lim\limits_{h \longrightarrow \infty}\frac{\lambda_{h+1,1}(G)}{\lambda_{h,1}(G)} = 1$.
\end{thm}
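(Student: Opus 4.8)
The plan is to squeeze the ratio between $1$ and $1+o(1)$. Throughout write $\lambda_h:=\lambda_{h,1}(G)$. Two facts are immediate. First, every $L(h+1,1)$-colouring of $G$ is also an $L(h,1)$-colouring, so $\lambda_h\le\lambda_{h+1}$ and the ratio is always at least $1$. Second, since $G$ has an edge, its two endpoints receive colours at least $h$ apart, so $\lambda_h\ge h$; in particular $\lambda_h>0$ (so the ratio is well defined) and $\lambda_h\to\infty$ as $h\to\infty$. Thus it suffices to prove an upper bound of the shape $\lambda_{h+1}\le\bigl(1+\tfrac{c}{h}\bigr)\lambda_h+O(1)$ for some absolute constant $c$.

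To obtain such a bound I would dilate an optimal colouring. Fix a minimum-span $L(h,1)$-colouring $f$ of $G$; after translating we may assume $f(V)\subseteq\{0,1,\dots,\lambda_h\}$ with both $0$ and $\lambda_h$ attained. Put $\psi(m)=\bigl\lceil(1+\tfrac{2}{h})m\bigr\rceil$ and $f'=\psi\circ f$. The map $\psi$ is strictly increasing on $\mathbb{Z}$, since $\psi(m+1)=\bigl\lceil(1+\tfrac{2}{h})m+1+\tfrac{2}{h}\bigr\rceil\ge\psi(m)+1$; hence any two vertices with $f(u)\ne f(v)$ also have $f'(u)\ne f'(v)$, so every constraint on vertices at distance $2$ that is met by $f$ is met by $f'$. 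If $uv\in E(G)$ with $f(v)>f(u)$, then $f(v)-f(u)\ge h$, and using $\lceil x\rceil\ge x$ and $\lceil y\rceil<y+1$,
$$f'(v)-f'(u)>\Bigl(1+\tfrac{2}{h}\Bigr)\bigl(f(v)-f(u)\bigr)-1\ \ge\ \Bigl(1+\tfrac{2}{h}\Bigr)h-1=h+1,$$
so adjacent vertices differ by at least $h+1$ (the values being integers). Therefore $f'$ is an $L(h+1,1)$-colouring, and its span is at most $\bigl\lceil(1+\tfrac{2}{h})\lambda_h\bigr\rceil\le(1+\tfrac{2}{h})\lambda_h+1$, which gives $\lambda_{h+1}\le(1+\tfrac{2}{h})\lambda_h+1$.

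Combining the inequalities, for every $h\ge1$,
$$1\ \le\ \frac{\lambda_{h+1,1}(G)}{\lambda_{h,1}(G)}\ \le\ 1+\frac{2}{h}+\frac{1}{\lambda_{h,1}(G)}\ \le\ 1+\frac{3}{h},$$
the last step using $\lambda_{h,1}(G)\ge h$; letting $h\to\infty$ and applying the squeeze theorem yields $\lim_{h\to\infty}\lambda_{h+1,1}(G)/\lambda_{h,1}(G)=1$. The only step that is not routine bookkeeping is the dilation: the dilation factor must be large enough (I use $1+\tfrac{2}{h}$ rather than the naive $1+\tfrac{1}{h}$) that rounding still leaves every edge's endpoints a full extra unit apart, while staying small enough that the span grows only by a factor $1+O(1/h)$; I expect getting this balance right to be the main, if modest, obstacle. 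A more careful construction that stretches $f$ only across the edges whose endpoints differ by exactly $h$ even yields $\lambda_{h+1,1}(G)\le(1+\tfrac{1}{h})\lambda_{h,1}(G)$, but that refinement is not needed for the limit.
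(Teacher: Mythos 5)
Your argument is correct, but note that the paper does not prove this statement at all: it is quoted verbatim from Chang, Ke, Kuo, Liu and Yeh \cite{CJ} and used as a black box, so there is no in-paper proof to compare against. What you have written is a clean, self-contained proof. The two halves both check out: monotonicity $\lambda_{h,1}\le\lambda_{h+1,1}$ and the edge-forced lower bound $\lambda_{h,1}\ge h$ are immediate, and the dilation $\psi(m)=\lceil(1+\tfrac{2}{h})m\rceil$ does what you claim --- it is strictly increasing on $\mathbb{Z}$ (preserving the distance-two separation, which for $k=1$ only requires distinctness), and your ceiling estimates give $f'(v)-f'(u)>(1+\tfrac{2}{h})h-1=h+1$, hence $\ge h+2$ by integrality, which is more than enough; as you observe, the factor $1+\tfrac{1}{h}$ would already suffice. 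The resulting squeeze $1\le\lambda_{h+1,1}/\lambda_{h,1}\le 1+\tfrac{3}{h}$ is valid. One small caveat: the paper's definition of an $L(h,k)$-colouring phrases the second condition via a common edge-intersection rather than literal distance two, but since that condition does not involve $h$ and is preserved under any injective relabelling of the colour values, your argument applies unchanged. In short, the proposal is correct and in fact supplies a proof where the paper offers only a citation.
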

Suppose $\bH$ is a $r$-uniform hypergraph with  maximum degree $\Delta$. Then the maximum degree of a vertex in $[\bH]_2$ is $(r-1)\Delta$. So, $\Delta([\bH]_2) \leq (r-1)\Delta$. By Lemma \eqref{lem} and Theorem \eqref{h1},
$$\lambda_{h,1}(\bH) = \lambda_{h,1}([\bH]_2) \leq (r-1)\Delta\big(\Delta r +h - \Delta -1\big)-2.$$ 
Furthermore, by Theorem \eqref{ratio}, 
$$\lim_{h \longrightarrow \infty}\frac{\lambda_{h+1,1}(\bH)}{\lambda_{h,1}(\bH)} = 1$$

For a graph $G$ with diameter two, Griggs and Yehh \cite{GY} established a better upper bound of $\lambda(G)$ in terms of maximum degree of $G$. 
\begin{thm}[\cite{GY}]
\label{d2}
If $G$ is a graph with diameter two, then $\lambda(G) \leq \Delta^2$.
\end{thm}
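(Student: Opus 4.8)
The plan is to exploit the rigidity forced by diameter two. In any $L(2,1)$-colouring $f$ of a graph $G$ of diameter two, every pair of distinct vertices is at distance $1$ or $2$, so in either case $|f(u)-f(v)|\ge 1$; hence $f$ is injective and uses $n=|V(G)|$ distinct colours. The problem thus becomes purely combinatorial: place the vertices injectively at positions in $\{0,1,\dots,k\}$ so that adjacent vertices never occupy consecutive positions, and minimise $k$. First I would note the Moore-type bound $n\le 1+\Delta+\Delta(\Delta-1)=\Delta^2+1$, obtained by a BFS from any vertex (levels $0,1,2$ only, of sizes $\le 1,\Delta,\Delta(\Delta-1)$).

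Next I would reformulate the labelling problem. Listing the vertices $w_1,\dots,w_n$ in increasing order of $f$-value, each gap $f(w_{i+1})-f(w_i)$ is $\ge 1$, and is $\ge 2$ precisely when $w_i\sim w_{i+1}$; conversely, starting from any ordering of $V(G)$ one may assign $f(w_1)=0$ and $f(w_{i+1})=f(w_i)+1$ or $+2$ according as $w_i\not\sim w_{i+1}$ or $w_i\sim w_{i+1}$, and one checks this is a valid $L(2,1)$-colouring (distances $1$ get gap $\ge 2$, distances $2$ get gap $\ge 1$). The consecutive pairs that are non-edges of $G$ — equivalently edges of the complement $\bar G$ — split such an ordering into vertex-disjoint paths of $\bar G$ covering $V$. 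Concatenating a \emph{minimum} path cover of $\bar G$, say with $p(\bar G)$ paths, yields an ordering whose only possibly-adjacent consecutive pairs are the at most $p(\bar G)-1$ junctions between paths, so that $\lambda(G)\le (n-1)+\big(p(\bar G)-1\big)=n+p(\bar G)-2$. (In fact equality holds, but only this inequality is needed.)

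Now I would bound $p(\bar G)$. A short exchange argument shows $p(\bar G)\le \alpha(\bar G)=\omega(G)\le \Delta+1$: if a minimum path cover of $\bar G$ used more than $\alpha(\bar G)$ paths, then choosing one endpoint from each path gives more than $\alpha(\bar G)$ vertices, two of which, endpoints $u\in P_i$ and $v\in P_j$ with $i\ne j$, are adjacent in $\bar G$, so $P_i$ and $P_j$ merge through $uv$ into one path, a contradiction. Then I would split on $n$. If $n\ge 2\Delta+1$, then $\delta(\bar G)=n-1-\Delta\ge (n-1)/2$, so $\bar G$ has a Hamiltonian path (add a universal vertex, apply Dirac's theorem, delete the vertex), i.e. $p(\bar G)=1$, giving $\lambda(G)\le n-1\le \Delta^2$. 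If $n\le 2\Delta$, then $\lambda(G)\le n+p(\bar G)-2\le n+\Delta-1\le 3\Delta-1\le \Delta^2$ whenever $\Delta\ge 3$.

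The only leftover is $\Delta\le 2$ (note $\Delta=1$ cannot occur, as a connected graph of maximum degree $1$ has diameter at most $1$): the connected graphs of diameter two with $\Delta=2$ are exactly $P_3$, $C_4$, $C_5$, and one verifies directly $\lambda(P_3)=3\le 4$ and $\lambda(C_4)=\lambda(C_5)=4=\Delta^2$. I expect the fiddliest points to be the careful bookkeeping in the path-cover reformulation (accounting for trivial one-vertex paths and for junctions that happen to be $\bar G$-edges) and checking that $n\ge 2\Delta+1$ is exactly the threshold making the Dirac condition work after adding the universal vertex. The genuinely delicate regime is $n$ near $\Delta^2+1$, where $G$ is (near-)extremal; but there the complement $\bar G$ is very dense, the Hamiltonian-path argument applies with room to spare, and the bound $\lambda(G)\le n-1\le\Delta^2$ is immediate.
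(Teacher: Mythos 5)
This theorem is not proved in the paper at all: it is imported verbatim from Griggs and Yeh \cite{GY} and used as a black box (to bound $\lambda(\bH)$ for diameter-two hypergraphs via the $2$-section), so there is no internal proof to compare yours against. Judged on its own merits, your argument is correct and is essentially a self-contained reconstruction of the classical Griggs--Yeh proof. Every link in the chain checks out: injectivity of any $L(2,1)$-colouring under diameter two; the Moore bound $n\le 1+\Delta+\Delta(\Delta-1)=\Delta^2+1$; the translation $\lambda(G)\le n+p(\bar G)-2$ via orderings whose consecutive non-edges of $G$ form a path cover of the complement (this is the Georges--Mauro--Whittlesey correspondence, and your verification that the greedy assignment from an ordering is a valid $L(2,1)$-colouring is right, since non-consecutive adjacent vertices automatically differ by at least $2$); the Gallai--Milgram-type bound $p(\bar G)\le\alpha(\bar G)=\omega(G)\le\Delta+1$ by the endpoint-exchange argument; and the universal-vertex-plus-Dirac trick, for which $n\ge 2\Delta+1$ is indeed exactly the threshold making $\delta(\bar G)=n-1-\Delta\ge(n-1)/2$. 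The two regimes $n\ge 2\Delta+1$ (giving $\lambda(G)\le n-1\le\Delta^2$) and $n\le 2\Delta$ (giving $\lambda(G)\le 3\Delta-1\le\Delta^2$ for $\Delta\ge 3$) cover all cases, and your direct check of $P_3$, $C_4$, $C_5$ correctly disposes of $\Delta=2$ ($\Delta\le 1$ being impossible at diameter two). The only cosmetic remark is that the parenthetical claim of equality in $\lambda(G)=n+p(\bar G)-2$ is unnecessary for the bound and would require a separate lower-bound argument; as you note, only the inequality is used.
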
 
If $\bH$ is an $r$-uniform hypergraph of diameter two, then for any two edges $e_1, e_2 \in E(\bH)$,  we have $e_1 \cap e_2 \neq \phi$. This implies, $diam([\bH]_2) \leq 2 $ with maximum degree $(r-1)\Delta .$ Thus by Theorem \eqref{d2} and Lemma \eqref{lem}, $ \lambda(\bH ) = \lambda([\bH]_2)   \leq  \big((r-1)\Delta\big)^2$. 

\subsection{On graphs associated with hypergraphs}
Let  $\bH$ be a hypergraph  without isolated vertices. The \textit{line graph} of $\bH$, denoted by $\mathcal{G}$, is a simple graph whose vertex set $V(\mathcal{G})$ is edge set $E(\bH)$ and two vertices are adjacent in $\mathcal{G}$ if and only if  their corresponding edges in $\bH$ have a non empty intersection. If $\mathcal{G}$ is a line graph associated to $\bH$, then $\bH$ is not unique. Below, we present an example of two non isomorphic linear $3$-uniform hypergraphs with isomorphic line graph.
 \begin{figure}[h]
 \centering
 \includegraphics[scale=.30]{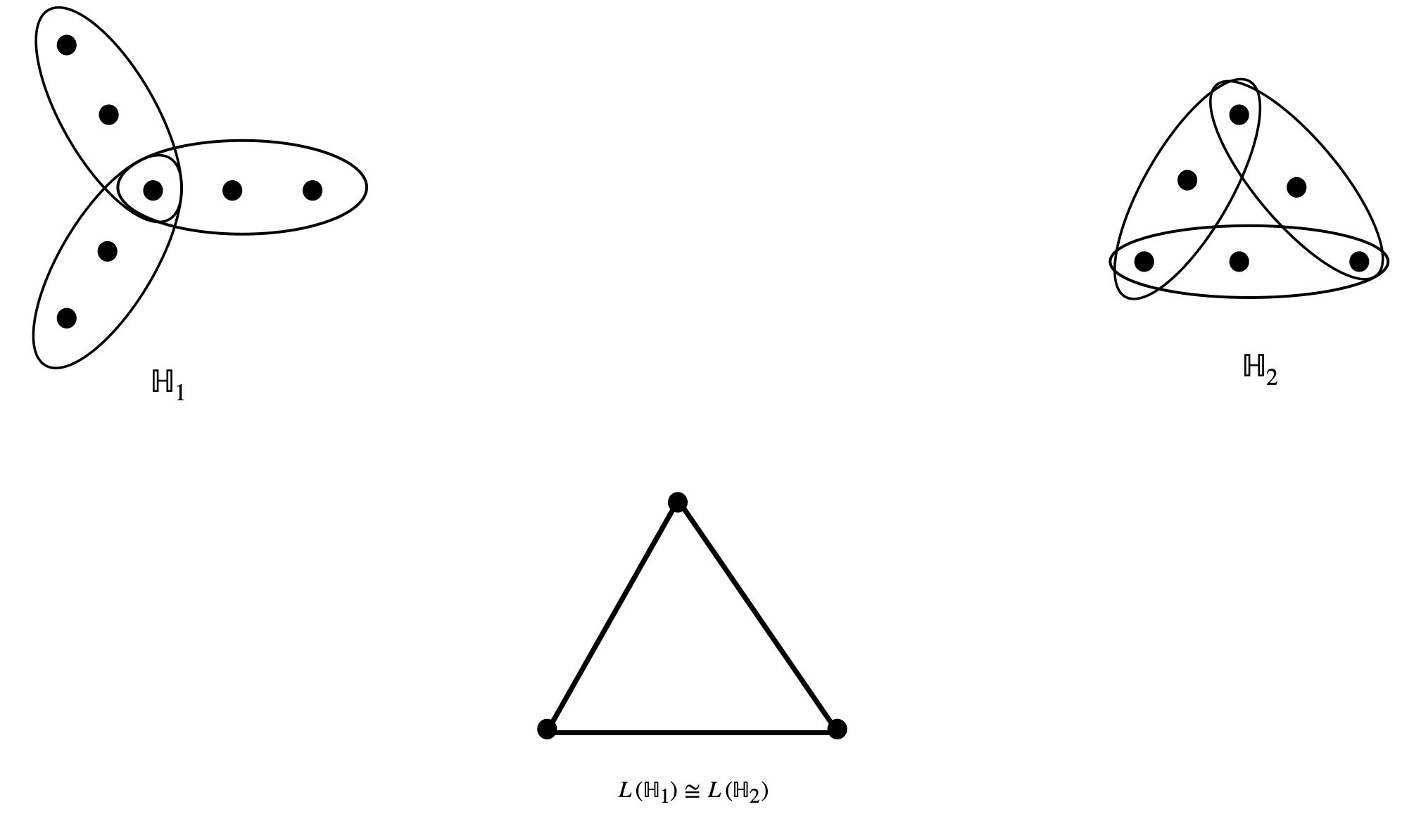}
     \caption{Two non-isomorphic hypergraphs with isomorphic Line graph.}
 \label{Fig:1}
 \end{figure}
We provide a relationship between the chromatic numbers $\chi(\G)$ and $\lambda(\bH)$. The result is significant in a sense that it gives the upper bound of $\lambda(\bH)$ for any hypergraph $\bH$ in the  collection of non-isomorphic hypergraphs with isomorphic line graphs.
 \begin{thm}
If $\G$ is a line graph  of some $r$-uniform linear hypergraph $\bH$  with $\chi (\G) = k$, then $\lambda(\bH) \leq 2kr-2$.
\end{thm}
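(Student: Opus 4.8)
The plan is to colour the vertices of $\bH$ by exploiting a proper colouring of the line graph $\G$, which corresponds to a partition of $E(\bH)$ into colour classes, each class being a matching (in the hypergraph sense: pairwise disjoint edges). Since $\chi(\G) = k$, write $E(\bH) = M_1 \cup M_2 \cup \cdots \cup M_k$ where each $M_i$ is a set of pairwise disjoint $r$-edges. Because every vertex of $\bH$ lies in at least one edge, the union of all vertices covered by the $M_i$'s is $V(\bH)$, so each vertex gets ``assigned'' to the first colour class $M_i$ whose edges cover it; this gives a partition $V(\bH) = W_1 \cup W_2 \cup \cdots \cup W_k$ of the vertex set.

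Next I would assign to the vertices in block $W_i$ colours drawn from an interval $I_i$ of integers, where the intervals $I_1, I_2, \dots, I_k$ are spaced so that colours from different blocks automatically differ by at least $2$ (the ``$k$'' of $L(2,1)$), and within a block the colours are spaced to differ by at least the ``$h=2$'' requirement among vertices that share an edge. The key structural point is that within one block $W_i$, any edge $e \in E(\bH)$ meets $W_i$ in a controlled way: since the edges of $M_i$ are pairwise disjoint, two vertices of $W_i$ that lie in a common hyperedge must come from configurations bounded by $r$. Concretely, a single hyperedge has $r$ vertices, so an edge contributes at most $r$ vertices to any one block; spacing consecutive colours in $I_i$ by $2$ handles the adjacency (distance-$1$) constraint inside the block, and I must check that vertices at hypergraph-distance $2$ also get colours differing by at least $1$, which is automatic once distinct colours are used throughout and the blocks are separated by gaps of size at least $2$. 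Counting: each block needs an interval of length at most $2r - 1$ (to place up to $r$ mutually-adjacent vertices with pairwise gaps $\geq 2$, with enough slack for the distance-$2$ vertices), and between consecutive blocks we insert a gap so the total span is at most $k \cdot (2r-1) + (k-1)\cdot(\text{gap})$; tuning the gap to $1$ gives span $\le 2kr - 2$, matching the claimed bound. I would present the explicit formula $f(v) = 2r(i-1) + (\text{local offset within } W_i)$ and verify the two inequality conditions case by case (same block vs. different blocks, adjacency vs. distance two).

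The main obstacle I anticipate is the within-block analysis: it is \emph{not} true that $W_i$ is a strong stable set, so two vertices of $W_i$ can be adjacent in $[\bH]_2$, and I need a clean bound on how many vertices of a single hyperedge can land in a single $W_i$ and how the local colouring inside $W_i$ must be organized. Here linearity of $\bH$ ($|e_1 \cap e_2| \le 1$) should help control the distance-$2$ interactions, and $r$-uniformity bounds the adjacency clusters. A secondary point requiring care is that the partition $V = W_1 \cup \cdots \cup W_k$ derived from the edge-colouring may leave the local colouring of $W_i$ needing its own small $L(2,1)$-type colouring on the subgraph of $[\bH]_2$ induced by $W_i$; I would argue this induced subgraph is a disjoint union of cliques of size $\le r$ (one per edge of $M_i$), each of which admits an $L(2,1)$-colouring of span $\le 2(r-1) + 1 = 2r-1$ because within a clique all pairs are adjacent. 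Assembling these with the inter-block offset of $2r$ and noting the first block starts at $0$ while the last finishes at $2r(k-1) + (2r-2) = 2kr - 2$ closes the argument.
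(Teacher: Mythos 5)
Your proposal follows essentially the same route as the paper's proof: both use the proper $k$-colouring of $\G$ to split $E(\bH)$ into $k$ classes of pairwise disjoint edges, allot each class a block of $2r$ consecutive integers, and colour the $r$ vertices of each edge with even offsets inside its block, so your formula $f(v)=2r(i-1)+(\text{local offset})$ is exactly the paper's $\bar f(v^j_l)=2jr+2l-2$ and yields the same span $2r(k-1)+2r-2=2kr-2$. The one point where your sketch overclaims is the assertion that $[\bH]_2$ restricted to a block $W_i$ is a disjoint union of cliques: two vertices lying in distinct (hence disjoint) edges of $M_i$ can still be adjacent, or at distance two, through an edge of another colour class, so they cannot safely receive equal offsets --- the within-block obstacle you yourself flag is real, and it is precisely the step that the paper's own proof also dispatches with a bare \enquote{clearly}, so your attempt is neither more nor less complete than the original on this point.
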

\begin{proof}
If $k$ is the chromatic number of $\G$, then $\lambda_{1,0}(\G)=k-1$, since we can colour the vertices using $0$ as well. Let $f$ be a minimal $L(1,0)$-colouring of $\G$ with $f(V(\G)) =\big\{ 0, 1, \cdots , k-1\}.$ Define an equivalence relation $\sim$ on $V(\G)$ as, $v \sim u$  if and only if $f(v) = f(u)$. Consequently, we have the same equivalence relation on $E(\bH)$.
For $0 \leq j \leq k-1$, let  $v_j$ be the representative  of an equivalence class for which $f(v_j)= j$ and let $e_j \in E(\bH) $ be the edge corresponding to vertex $v_j \in V(\G)$. Consider the ordered set $ e_j = \big\{v^j_l: 1 \leq l \leq r\big\}$ and define $\bar{f}:V(\bH) \longrightarrow \Z_{\geq 0}$ by,
\begin{eqnarray*}
\bar{f}(v^0_l)&=& 2l-2,\\ 
\bar{f}(v^j_l)&=&2jr+2l-2, \mbox{ where } 0<j \leq k-1.
\end{eqnarray*}
Therefore, $span(\bar{f}) = 2kr-2$. The map $\bar{f}$ gives colors to all vertices of $\bH$ representing edges in each equivalence classes of $\sim$ on $E(\bH)$. However, if a vertex $v$ appears in more than one edge, then colour $v$ by the minimum color which $v$ inherits from the above colouring. Clearly, $\bar{f}$ is a well defined $L(2,1)$-colouring of $\bH$, since $f$ is well defined $L(1,0)$-colouring of $\G$. Therefore the result follows.
\end{proof}
Next, we present an extension of Lemma \eqref{lem}.
\begin{thm}
If $[\bH]_s$ is the $s$-section of  $\bH, 2<s\leq rank(\bH)$, then  the following hold,
$$\lambda_{h,k}(\bH) = \lambda_{h,k}([\bH]_s).$$
\end{thm}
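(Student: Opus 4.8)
The plan is to mimic the proof of Lemma~\ref{lem} but argue about distance-one and distance-two relations in $[\bH]_s$ directly, using the key observation that the $2$-section of $[\bH]_s$ equals the $2$-section of $\bH$. First I would recall that $V([\bH]_s) = V(\bH)$, so any vertex colouring of one is a vertex colouring of the other; it therefore suffices to show that the constraint graph (vertices joined when they must differ by at least $h$, and vertices joined when they must differ by at least $k$) is the same for $\bH$ and for $[\bH]_s$. The cleanest route is to invoke Lemma~\ref{lem} twice: by definition $[[\bH]_s]_2 = [\bH]_2$ (an edge of size $s$ contributes exactly the same pairs to the $2$-section as the original edge it sits inside, and small edges are untouched), so $\lambda_{h,k}([\bH]_s) = \lambda_{h,k}([[\bH]_s]_2) = \lambda_{h,k}([\bH]_2) = \lambda_{h,k}(\bH)$.

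If instead a self-contained argument is wanted, I would verify the two adjacency conditions by hand. For the $h$-condition: two vertices $v_1,v_2$ lie in a common edge of $[\bH]_s$ iff they lie in a common $s$-subset $e'\subseteq e$ for some $e\in E(\bH)$ (or in a common small edge $e'=e$), and in either case $v_1,v_2$ lie in a common edge of $\bH$; conversely if $v_1,v_2\in e\in E(\bH)$ with $|e|\ge s$, then since $s\ge 2$ we can choose an $s$-subset $e'$ of $e$ containing both, so $\{v_1,v_2\}$ is ``edge-adjacent'' in $[\bH]_s$ as well. Hence the $h$-constraint pairs coincide. For the $k$-condition: $d(v_1,v_2)=2$ in $[\bH]_s$ iff there are edges $e_1',e_2'$ of $[\bH]_s$ with $v_1\in e_1'$, $v_2\in e_2'$ and $(e_1'\cap e_2')\setminus\{v_1,v_2\}\ne\emptyset$; lifting $e_i'$ to an edge $e_i\supseteq e_i'$ of $\bH$ gives the same witness in $\bH$, and conversely given such $e_1,e_2$ in $\bH$ one uses $s\ge 2$ (indeed the freedom to pick $s$-subsets) together with a common witness vertex $w\in (e_1\cap e_2)\setminus\{v_1,v_2\}$ to produce $s$-subsets $e_1'\ni v_1,w$ and $e_2'\ni v_2,w$, so the distance-two relation is preserved. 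Thus every $L(h,k)$-colouring of $\bH$ is one of $[\bH]_s$ and vice versa, and the minimal spans agree.

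The main obstacle is the converse direction of the $k$-condition: one must be careful that when we shrink an edge $e_i$ of rank possibly much larger than $s$ down to an $s$-subset $e_i'$, we retain both the relevant vertex $v_i$ \emph{and} a common intersection vertex $w$; this needs $|e_i'|=s\ge 2$ so that $e_i'$ can contain $\{v_i,w\}$ (with $v_i\ne w$), and if $v_1=v_2$ there is nothing to prove. A secondary subtlety is the treatment of edges of $\bH$ of size less than $s$, which pass to $[\bH]_s$ unchanged, so no case is lost there. Once these checks are in place the equality $\lambda_{h,k}(\bH)=\lambda_{h,k}([\bH]_s)$ follows exactly as in Lemma~\ref{lem}, by noting a colouring of smaller span on either side would contradict minimality on the other.
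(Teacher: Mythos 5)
Your proposal is correct, and your primary route is genuinely different from the paper's. The paper proves the theorem directly: it takes a minimal $L(h,k)$-colouring of $[\bH]_s$, verifies by a case analysis on $|e_1|,|e_2|$ versus $s$ that it satisfies both the adjacency and the distance-two constraints of $\bH$, and then argues minimality by contradiction --- essentially your ``self-contained'' second paragraph, though your version is cleaner about the converse inclusion (the paper only checks that constraints of $\bH$ are enforced by $[\bH]_s$, and handles the other direction with a one-line remark). Your first route instead observes that $[[\bH]_s]_2=[\bH]_2$ (an $s$-subset of $e$ contributes exactly the $2$-subsets of $e$ that it contains, and every $2$-subset of $e$ extends to an $s$-subset since $s\le |e|$; small edges pass through unchanged) and then applies Lemma~\ref{lem} twice. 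This buys a shorter proof that isolates all the combinatorial content in the single identity $[[\bH]_s]_2=[\bH]_2$ and avoids re-deriving the constraint-graph comparison; the cost is that it leans on $[\bH]_s$ being a legitimate (simple) hypergraph to which Lemma~\ref{lem} applies, which the paper has already noted. Your identification of the delicate point --- that when shrinking an edge $e_i$ to an $s$-subset one must retain both $v_i$ and the witness vertex $w$ --- is exactly the step the paper's case analysis is handling, so nothing is missing.
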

\begin{proof}
Let $f$ be a minimal $L(h,k)$-colouring of $[\bH]_s$. We show that $f$ is a well defined  $L(h,k)$-colouring of $\bH$.  Since $V([\bH]_r) = V(\bH)$  and $v_1, v_2$ are adjacent in $[\bH]_r$ if and only if  $v_1, v_2 \in e$ for some  $e \in E(\bH)$. Therefore,
$|f(v_1) - f(v_2)|$. Let $v_1 \in e_1$ and $v_2 \in e_2 \in E(\bH)$. Suppose that there exists  a vertex   $u \in e_1\cap e_2,$ distinct from $v_1$ and $v_2$. If $|e_1|$ and $|e_2|$ is atleast $ s$,  then by the definition of $s$-section, there exists edges $e_1^\prime, e_2^\prime \in E([\bH]_s)$ such that $v_1 \in e_1^\prime \subset e_1, v_2 \in  e_2^\prime \subset e_2$ and $ |e_1^\prime| =|e_2^\prime|=s$, $u \in  e_1^\prime \cap e_2^\prime$. Note that there exists no edge  $e^\prime \in  E([\bH]_s)$ containing both $v_1$ and $v_2$. However, if  $|e_1| < s$ or $|e_2|  < s $, then either $e_1 = e_1^\prime$ or $e_2 = e_2^\prime$ with $v_1 \in e_1^\prime,    v_2  \in e_2^\prime$ and  $u \in  e_1^\prime \cap e_2^\prime$.
Therefore,  $|f(v_1) - f(v_2)| \geq k$, whenever $ v_1 \in e_1,  v_2 \in e_2 \mbox{ and }  u \in e_1\cap e_2.$ Thus, $f$ is well defined $L(h,k)$-colouring of $\bH$ and consequently, $\lambda_{h,k}(\bH) \leq \lambda_{h,k}([\bH]_r)$. To prove $f$ is a minimal $L(h,k)$-colouring of $\bH$. Assume that there exists some other $L(h,k)$-colouring $g$ of $\bH$ such that $span(g) <  \lambda_{h,k}([\bH]_r)$. Since  any  $L(h,k)$-colouring of $\bH$  is an $L(h,k)$-colouring of $[\bH]_r$. Then in particular $g$ is an $L(h,k)$-colouring of $[\bH]_r$  with $span(g) <  \lambda_{h,k}([\bH]_r)$, a contradiction. Therefore the result follows.

\end{proof}

\subsection{ $\lambda$-chromatic number of Hypertrees}
\label{5}
An $r$-uniform hypergraph $\bH$ is called a \textit{star-hypergraph}  if there exists  $\mathcal{C}\subset V$ called the \textit{centre} of the star-hypergraph such that $e_i \cap e_j =\mathcal{C}$ for all $e_i, e_j \in E$. If  $|E|=m$ and $|\mathcal{C}|=c$, then the star-hypergraph is denoted by $\mathbb{K}_{c,m}^r$.

Let $f$ be an $L(h,k)$-colouring of $\bH$. If there are vertices $u, v\in V(\bH)$ such that $f(u)= h'-1$, $f(v)=h'+1$ and no vertex $x\in V(\bH)$ with $f(x)=h'$, then the positive integer $h'$ is said to be a \textit{hole} of $f$.  
 \begin{lem}
\label{lem2} Let $\mathbb{K}_{c,m}^r$ be a star-hypergraph with at least two edges. Then the following hold, $$\lambda(\mathbb{K}_{c,m}^r) = m(r-c)+2c-1.$$
\end{lem}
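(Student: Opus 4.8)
The plan is to prove the two inequalities $\lambda(\mathbb{K}_{c,m}^r)\le m(r-c)+2c-1$ and $\lambda(\mathbb{K}_{c,m}^r)\ge m(r-c)+2c-1$ separately, using the explicit structure of the star-hypergraph. Write $\mathcal{C}=\{x_1,\dots,x_c\}$ for the centre and, for $1\le i\le m$, let $y_{i,1},\dots,y_{i,r-c}$ be the vertices of $e_i$ lying outside $\mathcal{C}$, so that $e_i=\mathcal{C}\cup\{y_{i,1},\dots,y_{i,r-c}\}$ and $n:=|V(\mathbb{K}_{c,m}^r)|=c+m(r-c)$. Since the $m\ge 2$ edges are distinct and pairwise intersect exactly in $\mathcal{C}$, we have $1\le c\le r-1$, hence $r-c\ge 1$, $m(r-c)\ge 2$ and $n>c$. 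The fact used on both sides is that in $\mathbb{K}_{c,m}^r$ any two distinct vertices must get distinct colours: two vertices of a common edge are adjacent, and two non-central vertices $y_{i,t},y_{j,t'}$ with $i\ne j$ are at distance $2$ through any vertex of $\mathcal{C}$. Thus every $L(2,1)$-colouring uses exactly $n$ values and has span at least $n-1$; the content of the lemma is the extra additive $c$.

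For the upper bound I would exhibit the colouring $f(x_s)=2(s-1)$ for $1\le s\le c$ and $f(y_{i,t})=2c+(i-1)+(t-1)m$ for $1\le i\le m$, $1\le t\le r-c$. The numbers $f(y_{i,t})$ sweep out exactly $\{2c,2c+1,\dots,2c+m(r-c)-1\}$, so $f$ is injective and $span(f)=m(r-c)+2c-1$. Validity reduces to routine checks: inside an edge $e_i$ the central colours $0,2,\dots,2c-2$ are pairwise at distance $\ge 2$, the non-central colours $2c+(i-1),2c+(i-1)+m,\dots$ have consecutive gaps $m\ge 2$, and a central colour differs from a non-central one by at least $2c-(2c-2)=2$; and for $i\ne j$ the colours $f(y_{i,t})$ and $f(y_{j,t'})$ differ, because equality would give $m\mid(i-j)$ with $0<|i-j|<m$. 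The hypothesis $m\ge 2$ is used exactly to make the non-central colours inside a single edge legally spaced.

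For the lower bound I would normalise $\min f=0$ and show that any $L(2,1)$-colouring $f$ omits at least $c$ values from $\{0,1,\dots,span(f)\}$; since $f$ uses $n$ distinct values there, this gives $span(f)+1\ge n+c$, i.e. $span(f)\ge m(r-c)+2c-1$. Let $p_1<\cdots<p_c$ be the colours of the central vertices. As each $x_s$ is adjacent to every other vertex, neither $p_s-1$ nor $p_s+1$ is a value of $f$; hence $\bigl(\bigcup_{s=1}^{c}\{p_s-1,p_s+1\}\bigr)\cap\{0,\dots,span(f)\}$ consists of omitted values, and it has size $2c-q-[p_1=0]-[p_c=span(f)]$, where $q\le c-1$ is the number of $s$ with $p_{s+1}-p_s=2$. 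If at most one of $p_1=0$, $p_c=span(f)$ holds, this already exceeds $c-1$. If both hold with $q=c-1$, then $\{p_1,\dots,p_c\}=\{0,2,\dots,2c-2\}$, so $1,3,\dots,2c-3$ are omitted as well, and every colour of $f$ then lies in the $c$-element set $\{0,2,\dots,2c-2\}$ — impossible because $n>c$. Hence $f$ omits at least $c$ values and the lower bound follows; together with the upper bound this yields the asserted equality.

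The step I expect to be the real obstacle is this last count: the crude estimate only produces $c-1$ omitted values, and squeezing out the final one requires ruling out the degenerate configuration in which the central colours form the progression $0,2,\dots,2c-2$ that simultaneously exhausts the whole span — and this is excluded precisely by $m(r-c)\ge 2$ (equivalently, $m\ge 2$ together with $r>c$), which forces strictly more vertices, hence more required colours, than that configuration can accommodate. Everything else — the structural reduction to injective colourings, the explicit colouring, and the spacing verifications — is straightforward once this point is settled.
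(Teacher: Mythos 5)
Your proposal is correct and follows essentially the same route as the paper: the identical explicit colouring for the upper bound (your $f(y_{i,t})=2c+(i-1)+(t-1)m$ is the paper's $f(v_j^l)=(j-1)m+2c+l-1$), and for the lower bound the observation that diameter two forces $n=c+m(r-c)$ distinct colours while the $c$ central vertices, being adjacent to everything, force at least $c$ omitted values. Your careful count of the omitted values --- including ruling out the degenerate case where the central colours form $\{0,2,\dots,2c-2\}$ and exhaust the span --- actually supplies the details behind the paper's one-line assertion that ``any $L(2,1)$-colouring has at least $c$ holes.''
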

\begin{proof} 
 Let $\mathcal{C}= \big\{u_s: 1 \leq s \leq c\big\}$ be the centre of the star-hypergraph $\mathbb{K}_{c,m}^r$ and  $e_l = \big\{ v_j^l : 1 \leq j \leq r-c\big\} \cup \mathcal{C}$  be the collection of edges in it, where $1 \leq l \leq m$. Define $L(2,1)$-colouring $f:V(\mathbb{K}_{c,m}^r) \longrightarrow \mathbb{Z}_{\geq 0}$ by, 
 \begin{eqnarray*}
 f(u_s)&=&2s-2  \\
 f(v_j^l)&=&(j-1)m+2c+l-1.\end{eqnarray*}
Therefore, $span(f) =  m(r-c)+2c-1$. Since any two edges $e_1, e_2 \in E(\mathbb{K}_{c,m}^r)$  have a non-empty intersection and $\mathcal{C} \subset e$ for every edge $e \in E(\mathbb{K}_{c,m}^r)$. It follows that $f$ is a well-defined $L(2,1)$-colouring. Therefore,
  $\lambda(\mathbb{K}_{c,m}^r) 
  \leq m(r-c)+2c-1$. The hypergraph $\mathbb{K}_{c,m}^r$ is of diameter two and $\mathcal{C}$  is contained in every edge of $\mathbb{K}_{c,m}^r$. Therefore any $L(2,1)$-colouring of $\mathbb{K}_{c,m}^r$ has atleast $c$ number of holes. This implies, $f$ is minimal and the result follows.
\end{proof}
Note that if $r=2$ and $c=1 $, then $\mathbb{K}_{c,m}^r$ is isomorphic to the star graph $\mathbb{K}_{1, m}$ and $\lambda(\mathbb{K}_{1,m})=m+1$

A connected hypergraph $\bH$ is said to be a \textit{hypertree} if it is linear and removal of any of its  edge $e$ results in a disconnected hypergraph $\bH - e$ and the number of connected components in $\bH - e$ is $|e|$. 
\begin{thm}
\label{hypt}
If $\bH$ is an $r$-uniform hypertree with maximum degree $\Delta$, then
$\lambda(\bH)$ is either $\Delta(r-1)+1$ or $\Delta(r-1)+2.$
\end{thm}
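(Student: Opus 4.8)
The plan is to prove $\Delta(r-1)+1\le\lambda(\bH)\le\Delta(r-1)+2$; since no value lies strictly between these, the statement follows. We may assume $\bH$ has at least two edges, so that $\Delta=\Delta(\bH)\ge 2$.

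\textbf{Lower bound.} Let $v$ satisfy $\deg(v)=\Delta$. Linearity forces any two of the $\Delta$ edges through $v$ to meet exactly in $\{v\}$, so these edges span a sub-hypergraph of $\bH$ isomorphic to the star-hypergraph $\mathbb{K}_{1,\Delta}^r$. Every $h$- and $k$-constraint internal to this sub-hypergraph is also a constraint in $\bH$, hence the restriction of any $L(2,1)$-colouring of $\bH$ to its vertices is an $L(2,1)$-colouring of it, of span at least $\lambda(\mathbb{K}_{1,\Delta}^r)=\Delta(r-1)+2\cdot1-1=\Delta(r-1)+1$ by Lemma~\ref{lem2}. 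Thus $\lambda(\bH)\ge\Delta(r-1)+1$.

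\textbf{Upper bound.} Here I would exhibit an $L(2,1)$-colouring with values in $\{0,1,\dots,\Delta(r-1)+2\}$. The defining property of a hypertree lets us order the edges $e_1,\dots,e_m$ so that $e_i\cap(e_1\cup\cdots\cup e_{i-1})$ is a single vertex $w_i$ for each $i\ge 2$ (otherwise removing a suitable edge would fail to create $|e|$ components); equivalently, root the vertex--edge incidence tree of $\bH$ at a vertex and process edges in breadth-first order. Colour $e_1$ by a $2$-separated $r$-tuple such as $0,2,\dots,2(r-1)$. When $e_i$ is reached, $w_i$ is already coloured and the only coloured edges meeting $e_i$ all contain $w_i$; to colour the $r-1$ new vertices of $e_i$ we must pick an $(r-1)$-set of colours that is $2$-separated, is $2$-separated from $f(w_i)$, and is disjoint from the colours used on the vertices other than $w_i$ in the previously coloured edges through $w_i$ (at most $(\deg(w_i)-1)(r-1)\le(\Delta-1)(r-1)$ of them). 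After also excluding $f(w_i)-1,f(w_i),f(w_i)+1$ there remain at least $(\Delta(r-1)+3)-(\Delta-1)(r-1)-3=r-1$ colours, but a bare count does not guarantee that $r-1$ of them can be taken pairwise $2$-separated, so the colours must be assigned with care. I would do this by imitating the colouring of Lemma~\ref{lem2}: around every vertex the incident edges get \emph{interleaved} colours (consecutive colours within one edge differing by the vertex's degree at that stage), and the invariant carried down the tree is that the partial colouring restricted to the star of any vertex has exactly the ``Lemma~\ref{lem2} shape'', with $f(w_i)$ in a controlled position and the yet-unused colours massed on the opposite side. The heart of the proof is to verify that this invariant can be re-established after adjoining each $e_i$ without the colours ever escaping $\{0,\dots,\Delta(r-1)+2\}$. (One may instead argue in $[\bH]_2$ using $\lambda(\bH)=\lambda([\bH]_2)$ from Lemma~\ref{lem}, as $[\bH]_2$ is a tree of cliques $K_r$.)

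\textbf{Main obstacle.} The difficulty is the interaction, at a vertex $w$ of large degree, of the $h$-condition inside each edge through $w$ (the $r$ vertices must be pairwise $2$-separated) with the $k$-condition across edges through $w$ (the non-$w$ vertices of different edges need only be distinct): it is this distinctness of $\deg(w)(r-1)$ vertices, not any separation, that pushes the span up to about $\Delta(r-1)$, and squeezing it into $\Delta(r-1)+2$ --- rather than the roughly $\Delta(2r-3)$ that colouring the edges at $w$ in disjoint blocks would cost --- forces a tight interleaving of those edges. But a careless interleaving packs a stretch of colours so densely that, later, an expansion at one of the vertices in that stretch has fewer than $r-1$ colours that are at once unused and $2$-separated, and the colouring cannot be completed. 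The delicate task is thus to fix simultaneously the order in which the edges at each vertex are coloured and the exact colours used, so that no later expansion is ever blocked; I expect the single unit of slack between $\Delta(r-1)+2$ and the local star bound $\Delta(r-1)+1$ to be precisely what this forces us to spend, which is also why the answer is pinned down only to two consecutive values.
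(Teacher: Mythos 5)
You follow the same route as the paper: the lower bound by restricting a minimal colouring to the star at a maximum-degree vertex, which by linearity is a copy of $\mathbb{K}_{1,\Delta}^{r}$, and invoking Lemma~\ref{lem2}; the upper bound by colouring that star as in Lemma~\ref{lem2} and propagating outward edge by edge, counting the forbidden colours at each newly reached vertex. Your lower bound is complete and correct, modulo one point: a single-edge hypertree has $\Delta=1$ and $\lambda=2(r-1)$, which exceeds $\Delta(r-1)+2=r+1$ once $r>3$, so ``we may assume at least two edges'' is quietly excluding a case where the statement actually fails rather than a triviality.

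The upper bound, however, is not a proof. You set up the count $(\Delta-1)(r-1)-\bigl(\Delta(r-1)-r\bigr)=1$, correctly observe that it only shows enough colours \emph{exist} and does not show that each new edge through $w_i$ can receive $r-1$ pairwise $2$-separated colours from what remains (with distinct edges merely kept colour-disjoint), and then propose to repair this by carrying a ``Lemma~\ref{lem2} shape'' invariant down the tree --- but you never state that invariant precisely or verify that it survives the adjunction of $e_i$, so $\lambda(\bH)\le\Delta(r-1)+2$ is not established. That verification is the entire content of the theorem beyond the star bound, since the available colours could a priori be packed too densely to admit a $2$-separated $(r-1)$-subset for every new edge. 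For comparison, the paper's own proof consists of exactly the bare count you distrust, asserted without addressing the separation constraint; your diagnosis of where the real work lies is sharper than the published argument, but as submitted your proposal stops at the same place and is incomplete at precisely that step.
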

\begin{proof}
Let $v \in V(\bH)$ be a vertex with maximum degree $\Delta$. This implies that the star $H(v)$ rooted at $v$ is isomorphic to $\mathbb{K}_{1,\Delta}^r$, and therefore, $\lambda(\bH) \geq   \lambda(\mathbb{K}_{1,\Delta}^r) =\Delta(r-1)+1$. Colour the vertices of  $H(v)$ as given in Lemma \eqref{lem2}. Consider a vertex $ u\in V(\bH(v))$ such that $V(\bH(u)) \setminus V(\bH(v)) \neq \phi$. Suppose $u$ is contained in $e_u\in E(H(v))$ and $k$ is the colour given to $u$. To colour the remaining vertices of $H(u)$ we need to avoid the colours $k-1$, $k+1$, $k$ and colours given to vertices in $e_u \setminus\{u\}$, that is, we do not use $r+2$ colours from the set $\mathcal{L}=\big\{0, 1, \cdots , \Delta(r-1)+1\big\}$.  Therefore the number of available colours from the set $\mathcal{L}$ to colour the  uncoloured vertices in $H(u)$ is $\Delta(r-1)-r$. Note that the number of uncoloured  vertices in $H(u)$ is at most $(\Delta-1)(r-1).$ Thus we need at most $(\Delta-1)(r-1) -\Delta(r-1)+r = 1$ colour  other than the colours in  the set $\mathcal{L}$ to colour all the uncoloured vertices of $H(u)$. Again, if there is a vertex $w \in \bH(v)$ distinct from $u$ such that $V(\bH(w)) \setminus V(\bH(v)) \neq \phi$, then proceeding by the similar process as above all the vertices of $\bH(w)$ are coloured. Continuing in the same manner we achieve the colouring of $\bH$ and therefore, $\lambda(\bH)$ is either $\Delta(r-1)+1$ or $\Delta(r-1)+2.$
\end{proof} 
The authors in \cite{GY} provide a sharp upper bound on the $\lambda$-chromatic number for trees. Theorem \eqref{hypt} is an interesting generalization to hypertrees. However, it is very difficult to characterize all hypertrees whose $\lambda$-chromatic number is $\Delta(r-1)+1$ or $\Delta(r-1)+2$. Substituting, $c=1$ in Lemma \eqref{lem2}, we get one of the classes of hypertrees for which the bound $\Delta(r-1)+1$ is achieved. Below, we define a \textit{hyperpath} as a generalization of a path in simple graphs and compute its $\lambda$-chromatic number. This provides us a class of hypertrees  for which another bound $\Delta(r-1)+2$ is achieved.
\begin{defn}
A $r$-uniform  hypergraph with $m$  number of edges is said to be a hyperpath denoted by \enquote{~$ \mathbb{P}^m_r$~}  if there exists sequence of edges and vertices $e_1v_1e_2v_2\cdots e_{m-1}v_{m-1}e_m$ such that for $1\leq i < j \leq m$,
\begin{equation*}
e_i\cap e_j= 
\begin{cases}
 \phi & j \neq i+1 \\
 v_i & j = i+1. 
\end{cases}
\end{equation*} 
\end{defn}
\begin{thm}
\label{hp}
Let $ \mathbb{P}^m_r$ be a $r$-uniform  hyperpath, where $r\geq 3$. Then the following hold,
\begin{equation*}
 \lambda(\mathbb{P}^m_r) =
\begin{cases}
 2r-2 & \text{ if }m=1 \\
  2r-1 & \text{ if } m= 2\\
  2r  &  \text{ if }m\geq 3.
\end{cases}
\end{equation*} 

\end{thm}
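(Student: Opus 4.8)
The plan is to settle $m=1$ and $m=2$ directly, read the upper bound $2r$ for $m\ge3$ off Theorem~\eqref{hypt}, and put the real effort into the lower bound. For $m=1$ the single edge is a clique of size $r$ under the adjacency relation, so an $L(2,1)$-colouring must assign its $r$ vertices distinct values that are pairwise at least $2$ apart, and conversely any such assignment works; hence $\lambda(\mathbb{P}^1_r)=2(r-1)=2r-2$. For $m=2$, $\mathbb{P}^2_r$ is exactly the star-hypergraph $\mathbb{K}_{1,2}^r$ with centre $\{v_1\}$, so Lemma~\eqref{lem2} with $c=1,\ m=2$ gives $\lambda(\mathbb{P}^2_r)=2(r-1)+1=2r-1$. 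For $m\ge3$, note that $\mathbb{P}^m_r$ is a connected linear $r$-uniform hypertree with maximum degree $\Delta=2$ (the vertices $v_1,\dots,v_{m-1}$ have degree $2$, all others degree $1$), so Theorem~\eqref{hypt} gives $\lambda(\mathbb{P}^m_r)\le\Delta(r-1)+2=2r$; moreover the sub-hypergraph of $\mathbb{P}^m_r$ induced on $e_1\cup e_2\cup e_3$ is precisely $\mathbb{P}^3_r$ (the remaining edges meet these vertices in at most one vertex) and its adjacency and distance-two relations are inherited from $\mathbb{P}^m_r$, so restricting a colouring yields $\lambda(\mathbb{P}^m_r)\ge\lambda(\mathbb{P}^3_r)$. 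Thus it suffices to prove $\lambda(\mathbb{P}^3_r)\ge2r$; and since $\mathbb{P}^3_r$ is again an $r$-uniform hypertree with $\Delta=2$, Theorem~\eqref{hypt} already gives $\lambda(\mathbb{P}^3_r)\in\{2r-1,2r\}$, so I only need to rule out the value $2r-1$.

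Suppose $f$ is an $L(2,1)$-colouring of $\mathbb{P}^3_r=e_1v_1e_2v_2e_3$ of span $2r-1$, with colours in $T=\{0,1,\dots,2r-1\}$. Each $e_i$ is a clique, so $f$ is injective on it and $A_i:=f(e_i)$ is an $r$-subset of $T$ whose elements are pairwise at least $2$ apart. The structural core is: for a private vertex $x\in e_1\setminus\{v_1\}$ and any $y\in e_2\setminus\{v_1\}$ one has $(e_1\cap e_2)\setminus\{x,y\}=\{v_1\}\ne\emptyset$, hence $f(x)\ne f(y)$; together with $|f(x)-f(v_1)|\ge2$ this forces $f(e_1\setminus\{v_1\})\cap A_2=\emptyset$, so $A_1\cap A_2=\{f(v_1)\}$, and counting gives $A_1\cup A_2=T\setminus\{\mu_1\}$ for a unique $\mu_1$, which then lies in $C:=T\setminus A_2$; hence $f(e_1\setminus\{v_1\})=C\setminus\{\mu_1\}$, and symmetrically $f(e_3\setminus\{v_2\})=C\setminus\{\mu_3\}$ for some $\mu_3\in C$. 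Since $e_1$ and $e_3$ are cliques, $C\setminus\{\mu_1\}$ and $C\setminus\{\mu_3\}$ are each pairwise at least $2$ apart, and $f(v_1)$ (resp.\ $f(v_2)$) must lie at distance $\ge2$ from every element of $C\setminus\{\mu_1\}$ (resp.\ $C\setminus\{\mu_3\}$).

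The last step confronts this with the shapes of $A_2$. An $r$-element subset of $\{0,\dots,2r-1\}$ with all gaps $\ge2$ must, in increasing order, start in $\{0,1\}$, end in $\{2r-2,2r-1\}$, and have all consecutive gaps equal to $2$ except possibly one gap of $3$, the latter only if it starts at $0$ and ends at $2r-1$. Since reflecting all colours, $u\mapsto 2r-1-f(u)$, again produces an $L(2,1)$-colouring of span $2r-1$ and maps $A_2$ to $2r-1-A_2$, I may assume $A_2$ is either $\{0,2,\dots,2r-2\}$ or a gap-$3$ set $\{0,2,\dots,2s\}\cup\{2s+3,2s+5,\dots,2r-1\}$ with $0\le s\le\lfloor(r-2)/2\rfloor$. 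For each of these I would compute $C$ explicitly and run a short case split on whether $f(v_1)$ lies in the lower or the upper block of $A_2$; the outcome is that the constraints on $f(v_1)$ are unsatisfiable whenever $1\le s$ (so those shapes cannot occur), while for $A_2=\{0,2,\dots,2r-2\}$ and for the $s=0$ set $\{0,3,5,\dots,2r-1\}$ they force $f(v_1)=0$. Running the identical analysis at $v_2$ forces $f(v_2)=0$ as well, so $f(v_1)=f(v_2)$; but $v_1$ and $v_2$ are distinct vertices of the clique $e_2$, forcing $|f(v_1)-f(v_2)|\ge2>0$ --- a contradiction. Hence $\lambda(\mathbb{P}^3_r)=2r$, and therefore $\lambda(\mathbb{P}^m_r)=2r$ for all $m\ge3$.

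I expect the gap-$3$ bookkeeping in the last step to be the main obstacle: one has to verify carefully that for each gap-$3$ set with $s\ge1$ every candidate colour for the junction vertex collides at distance $1$ with a colour already forced onto its edge, while the two remaining shapes genuinely pin both $f(v_1)$ and $f(v_2)$ to $0$, which is the clash that finally lives inside $e_2$.
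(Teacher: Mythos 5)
Your proof is correct, but it takes a genuinely different route from the paper, most notably on the lower bound. The paper establishes the upper bound for $m\geq 3$ by exhibiting an explicit span-$2r$ colouring of $\mathbb{P}^3_r$ and then inducting on $m$ (reusing the colours of $e_{k-1}$ on $e_{k+1}$, which is legitimate since $e_{k-1}\cap e_{k+1}=\phi$); for the lower bound it notes $\lambda(\mathbb{P}^3_r)\geq \lambda(\mathbb{K}^r_{1,2})=2r-1$ and then simply appeals to ``the same argument as in Theorem~\eqref{hypt}'' to conclude equality with $2r$ --- but Theorem~\eqref{hypt} only confines $\lambda$ to $\{2r-1,2r\}$, so the paper never actually rules out $2r-1$. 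Your proposal does rule it out: the observation that $A_1\cap A_2=\{f(v_1)\}$ forces $f(e_1\setminus\{v_1\})=C\setminus\{\mu_1\}$ with $C=T\setminus A_2$, combined with the classification of $r$-subsets of $\{0,\dots,2r-1\}$ with gaps at least $2$ (all gaps $2$, or exactly one gap $3$ with endpoints $0$ and $2r-1$) and the reflection symmetry, pins $f(v_1)=f(v_2)=0$ in the surviving cases and contradicts $v_1,v_2\in e_2$. I checked the deferred case split: for $A_2=\{0,2,\dots,2r-2\}$ any $f(v_1)=2j$ with $j\geq 1$ has both odd neighbours $2j\pm1$ in $C$ and only one can be $\mu_1$, and for the gap-$3$ sets with $1\leq s\leq\lfloor(r-2)/2\rfloor$ the constraint $\mu_1\in\{2s+1,2s+2\}$ leaves no admissible value of $f(v_1)$; so the outcomes you assert are correct, though you should write this verification out rather than describe it. Your upper bound also comes for free from Theorem~\eqref{hypt} plus the monotonicity $\lambda(\mathbb{P}^m_r)\geq\lambda(\mathbb{P}^3_r)$ under restriction to $e_1\cup e_2\cup e_3$, avoiding the paper's induction entirely. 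In short, your approach is less constructive but supplies the one step the paper's own proof leaves unjustified.
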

\begin{proof}
The cases $m=1,2$ follow trivially.  Let $\mathbb{P}^m_r$ be a hyperpath on $m$ edges, $m \geq 3$. We prove the result by induction on $m$. For $m=3$, let $\mathbb{P}^3_r =e_1v_1e_2v_2e_3$ be a $r$-uniform hyperpath on $3$ edges, where $V(\mathbb{P}^3_r) =\{ v_{i_j}^j, v_1, v_2\}$, $ e_j = \{ v_{i_j}^j\} \cup \{v_j\}, 1\leq j \leq 3 $. Define $L(2,1)$-colouring $f:\longrightarrow \mathbb{Z}_{\geq 0}$ by,\\
\begin{eqnarray*}
f(v_1)&=& 0,\\
f(v_{i_1}^1)&=&  ~2i_1+1,1\leq i_1\leq r-1,\\
f(v_{i_2}^2)&=& 2i_2, ~1\leq i_2\leq r-2,\\
f(v_{i_3}^3)&=& 2i_3-1, ~1\leq i_3 \leq r-1,\\
f(v_2)&=& 2r.
\end{eqnarray*}
This completes the $L(2,1)$-colouring of all vertices of $\mathbb{P}^3_r$. $f$ is  well-defined, since every pair of adjacent vertices have colours that differ by at least $2$ and any pair of  vertices whose corresponding edges have a non-empty intersection has colours that differ by at least 1. Therefore, $\lambda(\mathbb{P}^3_r) \leq 2r$. 

The star $\bH(v_1)$ is isomorphic to $\mathbb{K}^r_{1, 2}$, therefore by Lemma \eqref{lem2}, $\mathbb{P}^{3}_r \geq 2r - 1$. To colour the remaining vertices of $\mathbb{P}^{3}_r$, we need to colour the vertices of star $\bH(v_2)$, where $\bH(v_1) \cap \bH(v_2) = e$. We follow the same argument as in Theorem \eqref{hypt} and conclude that $\lambda(\mathbb{P}^{3}_r) = 2r$. For $m=k+1$, let $\mathbb{P}^{k+1}_r = e_1v_1e_2v_2\cdots v_ke_{k+1}$ be a $r$-uniform hyperpath on $k+1$ edges. Removing the edge $e_{k+1}$ from $\mathbb{P}^{k+1}_r$, we are left with the hyperpath $\mathbb{P}^{k}_r = e_1v_1e_2v_2\cdots v_{k-1}e_k$. By induction hypothesis, $\lambda(\mathbb{P}^k_r) = 2r$ and this minimal colouring assigns colour to the vertices in edges $e_1, e_2, \cdots, e_k$ of   $\mathbb{P}^{k+1}_r$. The remaining vertices in $e_{k+1}$ are coloured by the colours given to vertices of $e_{k-1}$. It is a well defined  minimal $L(2,1)$-colouring of $\mathbb{P}^{k+1}_r$, since $e_{k-1} \cap e_{k+1} = \phi $.
\end{proof}

 \section{$\lambda$-Chromatic number of Cartesian product} 
 \label{s4}
There are various surprising variety of hypergraph products that have been discussed earlier. Most of the hypergraph products are generalisations of four \enquote{standard graph products} called as, \textit{Cartesian product, Tensor product, Strong product} and \textit{Lexicographic product}. These graph products behave reasonably well in an algebraic way, while maintaining the salient structure of their factors. In this section, we focus on the Cartesian product of finite  hypergraphs with finitely  many factors.
 \begin{defn}[{\bf Cartesian Product}] Let $\{ \bH_i~ |~ i \in I\}$ be a family of hypergraphs. The Cartesian product $\bH = \tiny{ \square} ~\bH_i$ is a hypergraph with vertex set $V(\bH) = \times_{i \in I} V(\bH_i)$ and edge set $E(\bH) = \{ E \subset V(\bH)~ |~ p_i(E) \in E(\bH_i) ~ for~ exactly~ one ~i \in I,~ and ~|p_j(E)| =1 ~for~ j \neq i \}$. For $i \in I$,  $p_i:V(\bH) \longrightarrow V(\bH_i)$ is the projection of the cartesian product on the $i^{th}$ coordinate. 
\end{defn}
The Cartesian graph product of simple graphs was introduced by Gert Sabidussi \cite{GS}, who showed that simple connected graphs have a unique Cartesian prime factor decomposition. This result was generalized by Wilfried Imrich \cite{WI} to simple hypergraphs. It is widely known that many significant graph invariants including $\lambda$-chromatic number propagate under graph products.  In fact, for all  values of $m$ and $n$, the exact values of the $\lambda$-chromatic number of the product of two paths $P_m$ and $P_n$ is determined in \cite{WGM}. In \cite{GMS}, the authors compute the  exact value of the $\lambda$-chromatic number of the product of two complete graphs $K_m$ and $K_n$ and of the graph $K_p \tiny{\square} \cdots  \tiny{\square} K_p$, where $p$ is a prime number. 

We arrange the vertices of the Cartesian product of two hypergraphs in a rectangular array. This allow us to colour the vertices conveniently. To illustrate the concept, we arrange the vertices of the cartesian product of two complete hypergraphs  $\bH_{10}$ and $\bH_{13}$ of order $10$ and $13$ respectively and determine its $\lambda$-chromatic number. 
\begin{figure}[h]

\centering
 \includegraphics[scale=.39]{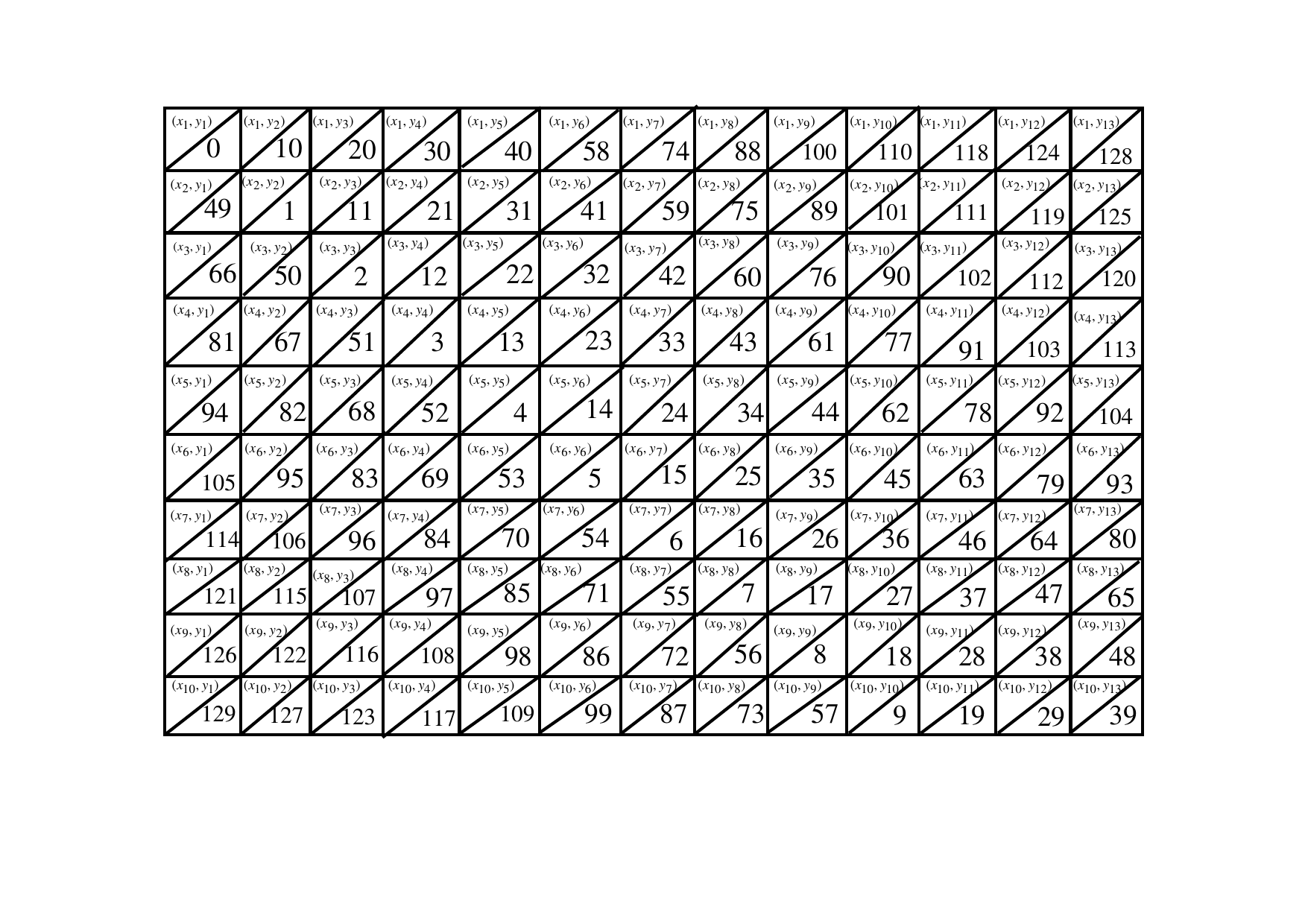}
     \caption{Minimum $L(2,1)$-colouring of $\bH_{10} \square ~ \bH_{13}$}

 \end{figure}
 
From the Figure (2), we see that $\lambda(\bH_{10} \square ~ \bH_{13}) = 129$.
In general, given two complete hypergraphs $\bH_m$ and $\bH_n$ of order m and n respectively, $\lambda(\bH_m \square ~ \bH_n)= mn-1$ with the minimum colouring provided below, where $V(\bH_n)=\{x_i~|~ 1 \leq i \leq n\}$, $V(\bH_m)=\{y_j~|~ 1 \leq j \leq m\}$ and $n \leq m$.
     \begin{equation*}
 f(x_i,y_j)= 
\begin{cases}
i-1 & \text{ if } i=j \\
 ln+i-1 & \text{ if } j = i+l, \text{ where } 1 \leq l \leq m-n+1\\
 mn+2n-n^2+i-2 &   \text{ if  } j= i-1\\
 n(m+2k)-n^2-k^2-1+i  & \text{ if } j = i-k,  \text{ where } 2 \leq k \leq n-1\\
 n(m+1)+k(2n-k-1)-n^2-3+i &  \text{ if } j = m-n+k+i,  \text{ where } 1 \leq k \leq n-1.
\end{cases}
\end{equation*} 

     The colouring is minimal, since  the graph has diameter 2 and the map is one-one. 
     
We arrange the vertices of the Cartesian product $\bH~ \square~ \bH^\prime$ of two hypergraphs $\bH$ and $\bH^\prime$ in the above-described rectangular array, where $x_i \in V(\bH)$ and $y_j \in  V(\bH^\prime)$. To investigate its $L(2,1)$-colouring, the following observations can be made about the rectangular array.
 \begin{itemize}
 \item Any row or column in the array induces a copy of  $\bH^\prime$ or $\bH$ respectively.
 \item The distance between two vertices in separate rows and columns is at least two.
 \item $d((x_i,y_j),(x_{i^\prime},y_{j^\prime}))= d(x_i, x_{i^\prime})+ d(y_j,y_j{^\prime})$
 \item If $M \subset V(\bH)$  consists of vertices whose mutual distance is $k$ and $N\subset V(\bH^\prime)$  consists of vertices whose mutual distance is $l$, then $M \times N \subset V(\bH ~\square ~\bH^\prime)$  consists of  vertices whose mutual distance is $k+l$. If $M$ and $N$ are maximal in the sense that they contain the maximum number of vertices with the given property, then so is   $M \times N$.
 \end{itemize}
In the following result, we determine the $\lambda$-chromatic number of  Cartesian product  of  star-hypergraph $\mathbb{K}_{c,m}^r$ and a complete hypergraph $\mathbb{H}_n$.

\begin{thm}
\label{abc}
let $\mathbb{K}_{c,m}^r$ be a $r$-uniform star-hypergraph and $\bH_n$ a complete hypergraph. For $m <n $, the following hold,
$$\lambda(  \mathbb{K}_{c,m}^r \tiny{\square}~ \bH_n)= nr-1. $$
\end{thm}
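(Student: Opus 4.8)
The plan is to establish the equality $\lambda(\mathbb{K}_{c,m}^r \square \mathbb{H}_n) = nr-1$ by proving matching upper and lower bounds. For the upper bound I would exhibit an explicit $L(2,1)$-colouring using exactly the colours $\{0,1,\dots,nr-1\}$; for the lower bound I would argue that the product has diameter two, so that any valid colouring must be injective, and then count vertices carefully to see that $nr-1$ colours are necessary (or nearly so), closing the small gap by a hole-counting argument in the spirit of Lemma \ref{lem2}.

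**Setting up the array and the upper bound.** Write $\mathcal{C}=\{u_s : 1\le s\le c\}$ for the centre of $\mathbb{K}_{c,m}^r$ and $e_l = \{v_j^l : 1\le j\le r-c\}\cup\mathcal{C}$ for its edges, $1\le l\le m$, so $|V(\mathbb{K}_{c,m}^r)| = m(r-c)+c$; let $V(\mathbb{H}_n)=\{x_1,\dots,x_n\}$. Following the rectangular-array device described before this theorem, I would place the copies of $\mathbb{H}_n$ along rows indexed by the vertices of $\mathbb{K}_{c,m}^r$ and copies of $\mathbb{K}_{c,m}^r$ along columns indexed by $x_1,\dots,x_n$. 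Since $\mathbb{H}_n$ is complete and every two edges of a star-hypergraph meet, the product has diameter two, and two vertices $(p,x_i),(q,x_j)$ are adjacent iff ($p=q$ and $i\ne j$) or ($i=j$ and $p,q$ lie in a common edge of $\mathbb{K}_{c,m}^r$). The colouring I would write down assigns to the $s$-th centre-row the block of colours $\{(s-1)n,\dots,(s-1)n + (n-1)\}$ shifted across its $n$ entries, i.e. $f(u_s,x_i) = (s-1)n + \pi_s(i)$ for suitable cyclic shifts $\pi_s$ chosen so that within each column the $c$ centre-vertices receive colours pairwise differing by at least $2$; the leaf-rows $v_j^l$ then get the remaining blocks of $n$ consecutive colours, with the shift inside each row guaranteeing that the column constraint (a copy of a star, diameter two, so needing injectivity plus spacing $\ge 2$ among centre entries) is met. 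A direct check—adjacent vertices in the same row differ by $\ge 2$ because within a row we only need an $L(2,1)$-colouring of $\mathbb{H}_n$, which consumes $2n-1$ colours but here we have a full block available; adjacent vertices in the same column differ by $\ge n\ge 2$ across distinct blocks, or are handled by the shift among the $c$ centre-entries; and vertices in different rows and columns are at distance $\ge 2$ so only need distinct colours—shows $\mathrm{span}(f)\le nr-1$, hence $\lambda\le nr-1$.

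**The lower bound.** For the reverse inequality: the product is a diameter-two hypergraph on $\big(m(r-c)+c\big)n$ vertices, so every $L(2,1)$-colouring is injective and $\lambda \ge \big(m(r-c)+c\big)n - 1$; but I also need to account for forced holes. The key sub-hypergraph is the column over any leaf-vertex that sits in the same edge as the centre — more precisely, I would locate inside the product a copy of a star-hypergraph (or at least a large clique-like structure) through which $\mathbb{H}_n$ is threaded, and argue as in Lemma \ref{lem2} that the $c$ centre-entries in each of the $n$ columns force at least $nr - 1 - \big(m(r-c)+c\big)n$ additional holes, so that $\lambda \ge nr-1$. Here the hypothesis $m<n$ is essential: it forces $m(r-c)+c \le nr-1$ in the right direction and makes the "extra" colours beyond the vertex count precisely the holes demanded by the centre. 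Combining the two bounds gives $\lambda(\mathbb{K}_{c,m}^r\square \mathbb{H}_n)=nr-1$, and minimality of $f$ follows since $f$ attains this span.

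**Main obstacle.** The genuinely delicate part is the lower bound, specifically the hole-counting: one must show that the diameter-two packing bound, which alone only gives $\big(m(r-c)+c\big)n-1$, can be pushed up to $nr-1$ by exhibiting enough forced holes, and this requires carefully identifying where in the array the centre $\mathcal{C}$ obstructs consecutive colours and verifying that these obstructions cannot be "shared" between columns. I expect this to hinge on the observation (third and fourth bullets preceding the theorem) that distances add under the Cartesian product, so that a set of vertices at mutual distance $2$ in $\mathbb{K}_{c,m}^r$ (the centre $\mathcal{C}$ together with one leaf per edge) times $V(\mathbb{H}_n)$ gives a large set at mutual distance $2$ in the product, on which any colouring must be injective with the centre-part additionally forcing gaps — and making that counting exact is the crux.
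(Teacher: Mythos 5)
Your argument has a fatal flaw at its core: the product $\mathbb{K}_{c,m}^r \square \bH_n$ does \emph{not} have diameter two. Distances add under the Cartesian product (the paper's own observation preceding the theorem), and $\mathbb{K}_{c,m}^r$ itself already has diameter two --- two non-central vertices lying in distinct edges are at distance two --- so pairs $(y,x_i),(y',x_{i'})$ with $y,y'$ in distinct edges of the star and $i\neq i'$ are at distance \emph{three} in the product. Consequently an $L(2,1)$-colouring need not be injective, and your lower bound $\lambda \ge \big(m(r-c)+c\big)n-1$ is false; worse, for $m\ge 2$ that quantity already exceeds $nr-1$ (take $r=3$, $c=1$, $m=2$, $n=3$: the product has $15$ vertices while the theorem asserts $\lambda=8$), so it contradicts the upper bound you claim in the same proof. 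The subsequent hole-counting is built on this false premise, and the number of ``additional holes'' you propose, $nr-1-\big(m(r-c)+c\big)n$, is negative.

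The same oversight undermines your upper bound. Assigning each of the $m(r-c)+c$ rows its own block of $n$ consecutive colours yields a span of $\big(m(r-c)+c\big)n-1$, not $nr-1$; nothing in your construction ever reuses a colour. The entire content of the theorem is that colours \emph{can} be reused: a set of $m$ leaves consisting of one vertex from each edge of the star is pairwise at distance two in $\mathbb{K}_{c,m}^r$, hence at distance three in the product once those vertices are placed in pairwise distinct columns, so such a ``diagonal'' of $m$ vertices may be made monochromatic. This is precisely the paper's device: it splits the array into one $c\times n$ block (a copy of $\bH_c\square\bH_n$, which genuinely has diameter two and so needs $nc$ pairwise distinct colours) and $r-c$ blocks of size $m\times n$, each covered by $n$ monochromatic diagonals $U_i^h$ of size $m$, for a total of $nc+n(r-c)=nr$ colours. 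The hypothesis $m<n$ then enters in the matching lower bound: each colour class inside an $m\times n$ block has size at most $m$, so each such block requires at least $n$ colours, and these cannot be shared with the centre block. Both the colour-reuse mechanism and this counting argument are missing from your proposal and would have to be supplied to repair it.
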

\begin{proof}
Let $\mathcal{C} = \{v_1, v_2, \cdots v_c\}$ be the central vertices of $\mathbb{K}_{c,m}^r$ and let $\mathcal{C^\prime}=\{y_1, y_2, \cdots y_{m(r-c)}\}$ be its non-central vertices. Consider its edges as $e_l =\{ y_l, y_{l+m}, y_{l+2m}\cdots \} \cup \mathcal{C}$, where $1\leq l \leq m$. Moreover, let $V(\bH_n)=\{x_1, x_2, \cdots x_n \}$.  The vertices of $\bH =  \mathbb{K}_{c,m}^r \tiny{\square}~ \bH_n$ are arranged in the following $(m(r-c)+c) \times n$ rectangular array given in Figure (3).

\begin{figure}[h]
 \label{Fig:1}
\centering
 \includegraphics[scale=.39]{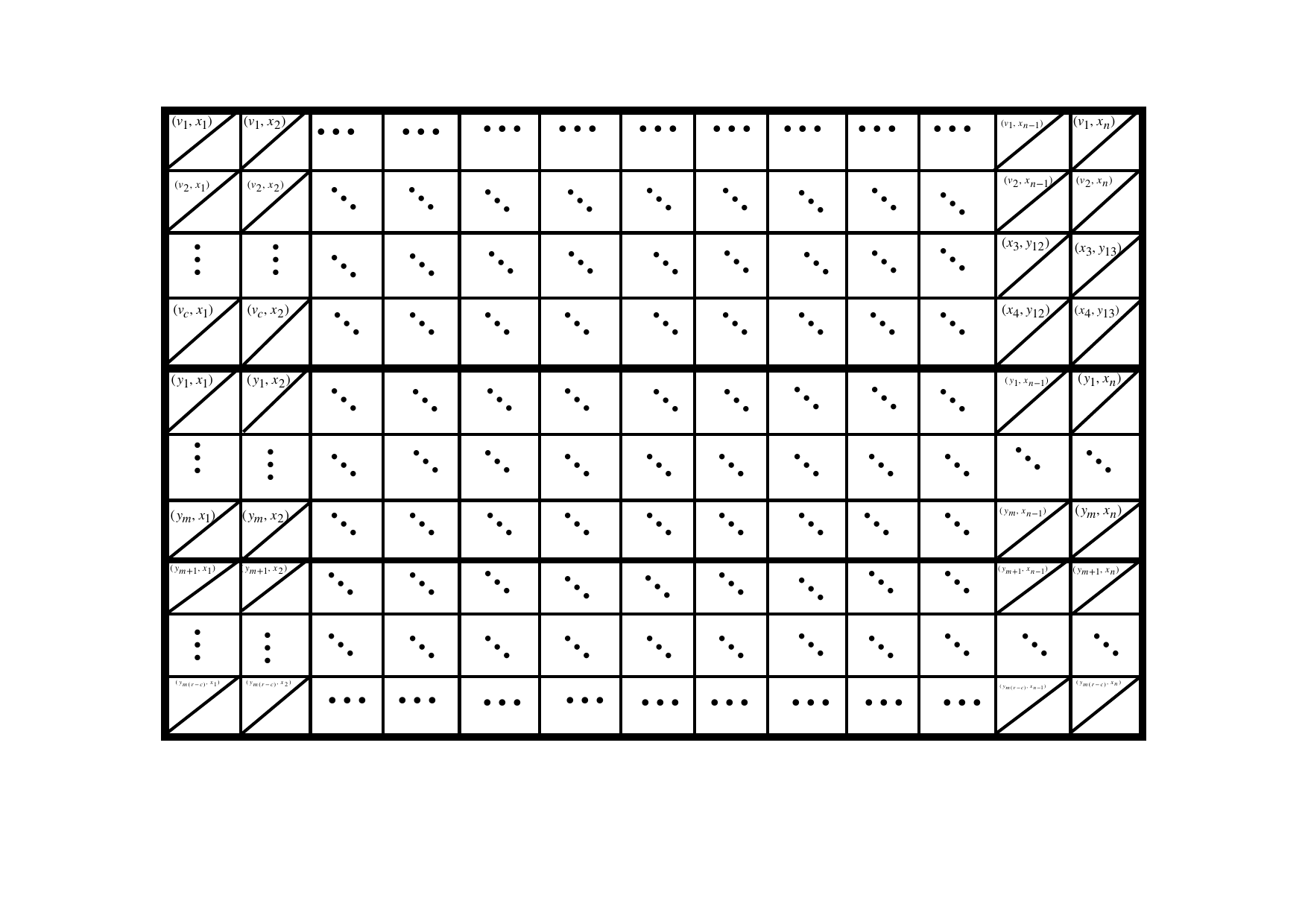}
     \caption{ $\mathbb{K}_{c,m}^r \square ~ \bH_{n}$}

 \end{figure}
Every edge in $\bH$  resides  completely either in a row or in a column of the array. Therefore,  $d\big((v_j,x_i),(v_{j^\prime} , x_{i^\prime})\big)= 2$,  $1 \leq i < i^\prime  \leq n $ and $1 \leq j < j^\prime \leq c.$ Consider the set $V_k=\{ y_{k+s}~|~0\leq s \leq m-1\} \subset \mathcal{C^\prime}$, where $k= 1 , m+1, 2m+1 \cdots $ and note that $d(x,y) =2$ for distinct $x,y \in  V_k$. Therefore for each $k$, $x \neq y \in V_k$, $d((x,x_i),(y,x_{i^\prime })) =3$, where $x_i \neq x_{i^\prime } \in V(\bH_n)$.  Partition the above-mentioned rectangular array into one $c \times n$ rectangular block  and $(r-c)$ number of $m \times n$ rectangular blocks as illustrated in the Figure (3). Observe that the initial $c \times n$ block induces  the Cartesian product of two complete hypergraphs $\bH_c$ and $\bH_n$. Furthermore, partition  each of the remaining $(r-c)$ blocks into $n$ sets $U^h_i$, where $1 \leq h \leq (r-c)$. The cardinality of each set of the type $U_i^h$ is $m$ and is, either a diagonal or a union of two diagonals in the block with no two vertices in a set sharing a column or a row. Again proceeding further and partition the first block into $n\ceil{\frac{n}{c}}$ diagonals say $D_t$ with lengths not more than $c$. For each $i$, $1\leq i \leq  n$, it is clear that any two vertices in $U^h_i$ do not share edges with non-empty intersection whereas for each $t$, any two vertices in $D_t$ share edges with non-empty intersection.

To color the vertices of $\bH$, start from $(v_1, x_1)$ and move downward through the diagonal $D_1$, make use of the least possible  available colours $\{ 0, 1 , \cdots c-1\}$. Similarly, colour the vertices through the diagonal $D_2$ starting from the $(c+1)^{th}$ row of the rectangular array (if any) and use the least available colours. Continuing in the same manner until we reach a vertex say $(v, x) \in D_{t^\prime}$, $1 \leq t^\prime < n\ceil{\frac{n}{c}}$ in the last row of the first block. Next, pick a set from the collection $\{U_i^1\}_{i=1}^{n}$ say $U_{i^\prime}^1$ whose first vertex  does not share a  column with $(v,x)$ and colour all its vertices with  a colour which exceeds one than the maximum colour in $D_{t^\prime}$. Moving to next block, again pick a set from the collection $\{U_i^2\}_{i=1}^{n}$ say $U_{i^{\prime \prime}}^2$, and colour its vertices by a colour that exceeds one more than that of the colours  given to vertices in $U_{i^\prime}^1$. Following the same process of colouring till we reach the last row of the array. Repeating  the same procedure from the first block until all the vertices of the hypergraph are coloured. Note that for each $i$ and $h$ we utilise a single colour to colour the vertices of $U_i^h$, that is each set of the type $U_i^h$ is monochromatic. The diagonals $D_t$ are coloured by $|D_t|$ number of colours. Consequently for each $m \times n$ rectangular block, we utilise $n$ colours to colour its vertices and $nc$ colours to colour the vertices of first $n\times c$ block.  It is a well defined $L(2,1)$-colouring, since we begin the colouring from zero. So, $\lambda( \mathbb{K}_{c,m}^r \tiny{\square}~ \bH_n) \leq nr-1.$ 

The colouring is minimal, since none of the colours in the lower blocks can be repeated more than $m$ times and we cannot use the same colour for vertices in the lower $m \times n$ rectangle blocks as the colour used in the first block. This is due to the fact that any vertex in a $m \times n$ rectangular blocks lying in edge $e$ has non-empty intersection with edge $e^\prime$ in the $c \times n$ rectangular block, and any vertex whose colour is repeated  more than $m$  shares an  edge  with  atleast one of the other $m$ vertices. Therefore the result follows.
  \end{proof}
Figure (4) below demonstrates the approach used in Theorem \eqref{abc} and we compute the  $\lambda$-chromatic number of $\mathbb{K}_{4,3}^7 \square ~ \bH_{14}$ as $97$.
 
  \begin{figure}[h]
  \label{fig4}
\centering
 \includegraphics[scale=.39]{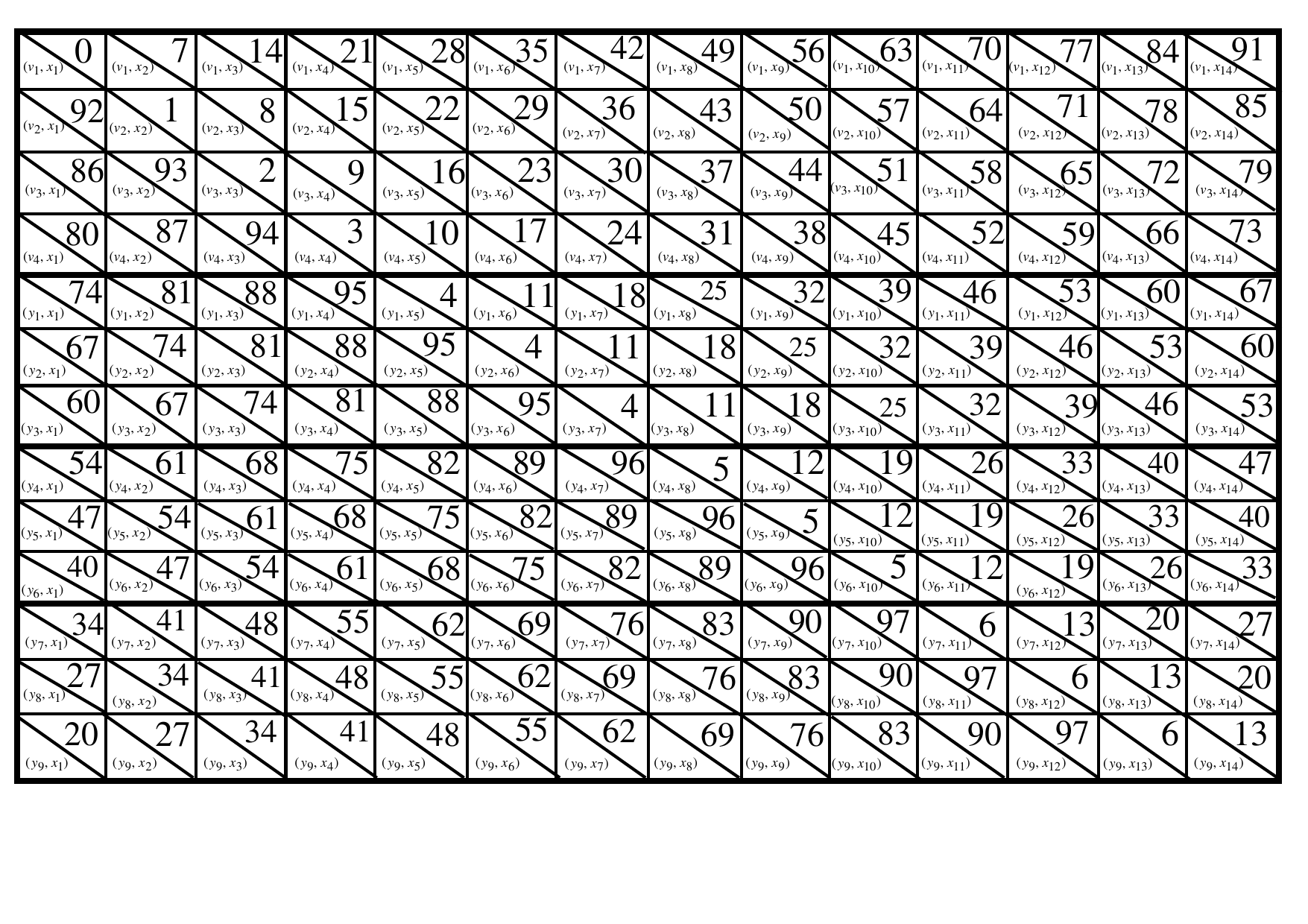}
     \caption{Minimum  $\lambda$-colouring of  $\mathbb{K}_{4,3}^7 \square ~ \bH_{14}$.}
 \label{Fig:1}
 \end{figure}

Finally, we determine the $\lambda$-chromatic number and the strong chromatic number of the product  $\bH_n \tiny{\square}~ \mathbb{K}_{c,m}^r$.
  \begin{thm}
 Let $\mathbb{K}_{c,m}^r$ be a star-hypergraph and $\bH_n$ a complete hypergraph. Then the following hold,
  \begin{equation*}
\lambda_{h,k}( \mathbb{K}_{c,m}^r \square~ \bH_n)=
\begin{cases}
  knr-k  & \text{ if }kr \geq h \\
  k(r-1)+h(n-1) & \text{ if }kr<h
\end{cases}
\end{equation*} 
and 
$$ \chi^\prime(\bH_n \square~ \mathbb{K}_{c,m}^r) = \min\{n,c\}+r-c,$$
where $h, k$ are any two positive integers with $h > k$.
  \end{thm}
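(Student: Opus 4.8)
\emph{Proof strategy.} The plan is to prove both identities by the rectangular–array bookkeeping already used for Theorem~\ref{abc}. Represent $V(\mathbb{K}_{c,m}^r\square\bH_n)$ by an array whose $m(r-c)+c$ rows are grouped as the $c$ rows of the centre $\mathcal{C}$ followed by the $m$ noncentral blocks $N_1,\dots,N_m$ (each of $r-c$ rows), and whose $n$ columns are indexed by $V(\bH_n)$. From $d\big((v,y),(v',y')\big)=d_{\mathbb{K}_{c,m}^r}(v,v')+d_{\bH_n}(y,y')$, the fact that $\bH_n$ has diameter $1$, and the fact that the diameter‑$2$ pairs of $\mathbb{K}_{c,m}^r$ are precisely the noncentral vertices lying in distinct edges, I would first record the distance picture: two cells are adjacent iff they share a row, or share a column with their $\mathbb{K}_{c,m}^r$–coordinates in a common edge $e_l$; two non‑adjacent cells are at distance $2$ iff they share a column with $\mathbb{K}_{c,m}^r$–coordinates noncentral in distinct edges, or their $\mathbb{K}_{c,m}^r$–coordinates lie in a common edge while the cells are in different columns; all other pairs are at distance $\ge 3$. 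In particular each column is an isometric copy of $\mathbb{K}_{c,m}^r$, and for every $l$ the cells $e_l\times V(\bH_n)$ induce a copy of $\bH_r\square\bH_n$ whose $nr$ vertices are pairwise within distance $2$.

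\emph{The $\lambda_{h,k}$ identity.} I would split on $kr\ge h$ versus $kr<h$. If $kr\ge h$: for the lower bound, restricting an arbitrary $L(h,k)$–colouring to the induced $\bH_r\square\bH_n$ makes it injective with all gaps $\ge k$ (all $nr$ vertices being pairwise within distance $2$), so its span is $\ge k(nr-1)$; for the upper bound I would colour the whole array, block by block, exactly as in Theorem~\ref{abc} but using the $nr$ multiples of $k$ in $[0,k(nr-1)]$ in place of $\{0,1,\dots,nr-1\}$, the one new point being that two adjacent cells must get multiples of $k$ whose indices differ by at least $\lceil h/k\rceil$, which is feasible since $\lceil h/k\rceil\le r$ when $kr\ge h$, while non‑adjacent pairs are then automatically $\ge k$ apart; this gives $\lambda_{h,k}=knr-k$. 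If $kr<h$: now the $h$–separation dominates, and a column (a copy of $\mathbb{K}_{c,m}^r$) contains the $r$–clique $e_1\times\{x\}$, forcing span $\ge h(r-1)$, while a row through $\mathcal{C}$ is an $n$–clique, forcing span $\ge h(n-1)$; combining these with the observation that pairs from distinct noncentral blocks only demand separation $k$ (they sit at distance $2$), and counting the holes that each noncentral block must leave, I expect the lower bound $k(r-1)+h(n-1)$, matched by a colouring that interleaves an $h$–spaced progression along each $\bH_n$–copy with a $k$–spaced pattern inside each copy of $\mathbb{K}_{c,m}^r$, arranged block by block so that the $m$ noncentral blocks reuse colours across columns (legitimate since two noncentral cells from distinct edges in distinct columns are at distance $\ge 3$).

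\emph{The $\chi'$ identity.} For $\bH_n\square\mathbb{K}_{c,m}^r$ I would use the analogous $n\times(m(r-c)+c)$ array, rows from $V(\bH_n)$, columns grouped as $\mathcal{C},N_1,\dots,N_m$. Since $\bH_n$ is complete every two cells of a fixed column span an edge, so a strong colouring must make each column rainbow ($n$ colours); and since $\mathcal{C}\cup N_l$ is an edge of $\mathbb{K}_{c,m}^r$, the cells of a fixed row lying in columns $\mathcal{C}\cup N_l$ must be rainbow ($r$ colours). A double count of the colours these two requirements force on the $\mathcal{C}$–columns together with one block $N_l$ yields $\chi'\ge\min\{n,c\}+(r-c)$. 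For the upper bound I would colour the $\mathcal{C}$–part by a shift of an optimal strong colouring of $\bH_n\square\bH_c$, then fill each noncentral block $N_l$ row by row with the remaining $r-c$ colours, cyclically shifted between rows so each noncentral column stays rainbow; because distinct blocks impose no cross constraints the same $r-c$ extra colours serve for every block, giving a valid strong colouring with $\min\{n,c\}+(r-c)$ colours.

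\emph{Main obstacle.} The delicate points are the two lower bounds in the regime $kr<h$ and for $\chi'$: unlike the diameter‑$2$ argument of the first case, they require showing that the $h$–constrained cliques (the $\bH_n$–copies and the edges $e_l$) cannot be packed any tighter given the weaker $k$–constraints between different noncentral blocks — essentially a hole‑counting argument in the spirit of Lemma~\ref{lem2} that must be executed with care — and the explicit optimal colourings realizing these bounds are correspondingly the most computational part of the proof.
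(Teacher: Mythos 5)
The paper offers no argument here beyond the single sentence that the claim ``follows by the same argument as in Theorem \eqref{abc}'', so there is no detailed proof to measure yours against; your proposal is in the same spirit (the rectangular array, the distance formula, the diameter-two subgrids) and is a genuine attempt to supply the missing details. But in supplying them it runs into a real obstruction: the lower bounds you correctly identify are incompatible with the formulas you then try to match, and the proposal never resolves this. For the strong chromatic number, you observe that every column $V(\bH_n)\times\{w\}$ is a copy of the complete hypergraph $\bH_n$ and hence must be rainbow, giving $\chi'(\bH_n\square\mathbb{K}^r_{c,m})\ge n$, and that every row restricted to an edge $\{y\}\times e_l$ must be rainbow, giving $\chi'\ge r$. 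Since $\min\{n,c\}+r-c\le r$ always, the claimed value is strictly below your own lower bound whenever $n>r$ (already for $\bH_3\square\mathbb{K}^2_{1,m}$, which contains a triangle, the formula gives $2$), so the upper-bound construction you describe cannot exist; indeed colouring the $\mathcal{C}$-columns by a strong colouring of $\bH_n\square\bH_c$ already consumes $\max\{n,c\}$ colours, not $\min\{n,c\}$.

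The same problem occurs in the regime $kr<h$: you note that each column contains the $r$-clique $e_l\times\{x\}$, forcing span at least $h(r-1)$, yet the claimed value $k(r-1)+h(n-1)$ is smaller than $h(r-1)$ whenever $n\le r$ (because $k(r-1)<kr<h$). A concrete failure: $\mathbb{K}^2_{1,2}\square\bH_2$ contains a $4$-cycle whose $L(h,k)$-span is $h+2k$ when $h>2k$, exceeding the claimed $h+k$. Even where the formulas are consistent with the lower bounds (roughly $n\ge r$ for the $\lambda_{h,k}$ claims, and $c\le n\le r$ for the $\chi'$ claim), the two upper-bound colourings are only asserted: in the second case one needs permutations $\sigma_1,\dots,\sigma_r$ of the columns with $\sigma_i(j)-\sigma_{i'}(j)\notin\{0,1\}$ for all $i<i'$ and all $j$ (so that $k(i-i')+h(\sigma_i(j)-\sigma_{i'}(j))$ has absolute value at least $h$), and the obvious cyclic shifts fail when $n=r$; in the first case the claim that the Theorem \eqref{abc} diagonals can be re-spaced so that adjacent cells receive indices at least $\lceil h/k\rceil$ apart within total span $nr-1$ is exactly the hard part and is left unproved. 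So the proposal, like the paper, does not prove the statement, and the statement itself needs additional hypotheses relating $n$, $r$ and $c$ before any proof can succeed.
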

  
\begin{proof}
The proofs follows by the same argument as in Theorem \eqref{abc}.
\end{proof}
  
{\bf Conclusion.}\\
In this research article, we discussed the generalisation of $L(h,k)$-colourings of graphs to hypergraphs and  explored the relationship between the $\lambda$-chromatic number of a connected $k$-regular graph to its expansion constant and spectral gap. We studied the relationship between the  spectral gap  of $k$-regular, $r$-uniform hypergraph $\bH$ and  its $\lambda$-chromatic number. We provided bounds for $L(h,k)$-chromatic number of a hypergraph  in terms of different hypergraph invariants. We established a relationship which connects the chromatic number of  the line graph associated with a hypergraph to the $\lambda$-chromatic number of the hypergraph. We determined the sharp bounds for $\lambda$-chromatic number of hypertrees. Finally, we discussed the distribution of $\lambda$-chromatic number over cartesian product of some classes of hypergraphs.

{\bf Acknowledgement:} The first author's research is partially supported by the University Grants Commission, Govt. of India under UGC-Ref. No. 191620023547.

\textbf{Data Availability}\\
Data sharing not applicable to this article as no datasets were generated or analysed during the current study.

\textbf{Declarations}

\textbf{Conflict of interest} The authors declare that there is no conflict of interest.

\end{document}